\documentclass[11pt]{amsart}
\usepackage{amssymb}
\usepackage[headings]{fullpage}
\usepackage{amsfonts}
\usepackage{paralist}
\usepackage{tikz}
\usepackage[dvipsnames]{xcolor}
\usepackage{graphicx}
\usepackage{subcaption}
\usepackage[shortlabels]{enumitem}
 
\usepackage[colorlinks,pagebackref=true,pdftex]{hyperref}

\makeatletter
\@addtoreset{equation}{section}
\def\theequation{\thesection.\@arabic \c@equation}
    
\def\@citecolor{blue}
\def\@linkcolor{blue}
\def\@urlcolor{blue}
 
\newtheorem{theorem}[equation]{Theorem}
\newtheorem{lemma}[equation]{Lemma}

\newtheorem{proposition}[equation]{Proposition}
\newtheorem{corollary}[equation]{Corollary}

\newtheorem{claim*}{Claim}

\theoremstyle{definition}
\newtheorem{remark}[equation]{Remark}

\newtheorem{eg}[equation]{Example}
\newenvironment{example}[1][]{\begin{eg}[#1] \pushQED{\qed}}{\popQED
\end{eg}}
\newtheorem{definition}[equation]{Definition}

\newtheorem{notn}[equation]{Notation}

\renewcommand{\k}{\Bbbk}
\newcommand{\mP}{\mathcal P}
\newcommand{\N}{\mathbb N}
\newcommand{\Z}{\mathbb Z}
\newcommand{\la}{\langle}
\newcommand{\ra}{\rangle}

\usepackage{scalerel}
\newcommand{\precdot}{\mathrel{\prec\mathrel{\mkern-8mu}\scalerel*{\cdot}{\prec}}}

\title{KW Semigroups - Their Betti Numbers, Ap\'ery Posets and Tangent Cones}

\author[M. Gonz\'alez-S\'anchez]{Mario Gonz\'alez-S\'anchez} 
\address{Instituto de Investigaci\'on en Matem\'aticas de la Universidad de Valladolid (IMUVA), Universidad de Valladolid, 47011 Valladolid, Spain.}
\email{mario.gonzalez.sanchez@uva.es}
\thanks{The first author is supported in part by the grant PID2022-137283NB-C22 funded by MICIU/AEI/ 10.13039/501100011033 and by ERDF/EU, and thanks financial support from European Social Fund, {\it Programa Operativo de Castilla y Le\'on}, and {\it Consejer\'ia de Educaci\'on de la Junta de Castilla y Le\'on.}
In addition, he thanks the University of Missouri for the financial support received for a two-month visit to MU during Spring 2026.}

\author[H. Srinivasan]{Hema Srinivasan}
\address{Department of Mathematics, University of Missouri, Columbia MO 65211, USA.}
\email{srinivasanh@missouri.edu}
\thanks{Second author is partially supported by Simons Foundation.}

\subjclass[2020]{13D02, 13A30, 20M14}

\begin{document}

\begin{abstract}
Let $p<q$ be coprime integers. Kunz–Waldi semigroups are numerical semigroups containing $p$ and $q$ and contained in $\la p,q,r \ra$, where $2r = p,q,p+q$ whichever is even.
In this paper, we prove a conjecture on the Betti numbers of the semigroup rings of these semigroups, showing that they coincide with those of the ideal of $2\times 2$ minors of a $2 \times n$ generic matrix, where $n$ is the embedding dimension.
Moreover, we characterize the Ap\'ery posets of Kunz–Waldi semigroups and determine when their tangent cones are Cohen–Macaulay.
\end{abstract}

\maketitle

\section{Introduction}
A numerical semigroup $H$ containing two relatively prime integers $p < q$ is called a {\it Kunz-Waldi semigroup} (or {\it KW semigroup}) if $\la p,q \ra \subsetneq H \subset  \la p,q,r \ra$ where $2r = p $ or $q$  or $p+q$ whichever is even. These semigroups were introduced and studied by Kunz and Waldi in \cite{KW14}.  Let $KW(p,q)$ denote the class of such semigroups.  
KW semigroups are in one-to-one correspondence to the lattice paths, with right and downward steps, in the rectangle $\mathbf{R} \subset \N^2$ with vertices $(0,0)$, $(0,p'-1)$, $(q'-1,p'-1)$, and $(q'-1,0)$,
where $p' = \lfloor p/2 \rfloor$ and $q' = \lfloor q/2 \rfloor$. In fact, a semigroup $H = \la p,q,h_1,\ldots,h_{n-2} \ra \in KW(p,q)$ if and only if $h_i = pq-x_ip-y_iq$ satisfying $0 < x_1 < \dots < x_{n-2} \leq q/2$ and $p/2 \geq y_1 > \dots > y_{n-2} > 0$, as observed in \cite[Rem. 1]{SS25}, and the embedding dimension of $H$ is $e(H) = n$. The corners of the lattice path corresponding to the semigroup $H$ have coordinates $(x_i-1,y_i-1)$, $1\leq i \leq n-2$.

In \cite{KW17}, Kunz and Waldi determine the minimal generators and type for the defining ideals of these semigroup rings. Indeed, they show that for a KW semigroup of embedding dimension n, the minimal number of generators and the type are $n \choose 2$ and $n-1$, respectively.  
In \cite{GSS25}, the authors give a complete characterization of when the semigroup rings of KW semigroup define determinantal varieties. They also study when KW semigroups lie in the interior of the same face of the Kunz cone as a determinantal KW semigroup, in which case they have determinantal Betti numbers.
We say that a numerical semigroup, or its associated variety, has {\it determinantal Betti numbers} if the Betti numbers are those of a determinantal variety defined by the $2\times 2$ minors of a generic $2\times n$ matrix.    
The subclass of semigroups in $KW(p,q)$ whose variety is determinantal is denoted by $KW_D(p,q)$. By \cite[Thm. 1.4]{GSS25}, a KW semigroup $H = \la p,q,h_1,\ldots,h_{n-2}\ra \in KW_D(p,q)$ if and only if there exist $x,y \in \Z_{>0}$ such that $(n-2)x \leq q/2$, $(n-2)y \leq p/2$, and $h_i = pq-x_ip-y_iq$, where $x_i = ix$ and $y_i = (n-1-i)y$ for all $1\leq i \leq n-2$.

It is conjectured in \cite{GSS25} that indeed all KW semigroups have determinantal Betti numbers and hence the Betti numbers of KW semigroups depend only on their embedding dimension.
In this paper, we settle that conjecture in affirmative. 
This is the main result of Section \ref{sec:bettti}, Theorem \ref{KWconj}.
We prove that the Betti numbers $\beta_i$ of a KW semigroup of embedding dimension $n$ are $\beta_i = i{n\choose i+1}, 1\le i \le n-1$.  This is the same as if they are generated by the $2\times 2$ minors of a $2\times n$ generic matrix.  In particular, this generalizes the result in \cite{KW17} which says the first and the last Betti numbers are $n\choose 2$ and $n-1$, respectively. 

A major ingredient in describing the Kunz Cone and its faces is the Ap\'ery poset. This was the main thrust in one half of the proof of the main theorem.  In Section \ref{sec:Apery}, we give a complete characterization of the Ap\'ery poset of a KW semigroup, this is the content of Theorem \ref{thm:Apery_KW}. Any semigroup with this Ap\'ery poset, will have determinantal Betti numbers, by Kunz's theorem \cite{Kunz87}.  However, it is interesting to note that one can have semigroups rings with determinantal Betti numbers whose  Ap\'ery posets are not like that of a KW semigroup; see Example \ref{ex:Ap_arithm}.

In Section \ref{sec:tangent_cone}, we consider the tangent cone $G(H)$ of KW semigroup rings and give a necessary and sufficient condition for these to be Cohen-Macaulay and prove that $G(H)$ has the same determinanatal Betti numbers when it is Cohen-Macaulay in Theorem \ref{thm:tangentcone_CM}. We also give examples to show that the Betti numbers will not be the same in non Cohen-Macaulay case. 

In this paper, $\k$ will denote an arbitrary field.

\section{Betti Numbers of KW semigroups} \label{sec:bettti}

Let $p<q$ be two relatively prime positive integers. So exactly one of $p,q, p+q$ must be even.  Let $r =p/2 $ or $q/2$ or $(p+q)/2$ whichever is an integer.  A KW semigroup $H$ is one that contains $p,q$ and is contained in $\la p,q,r \ra$.  We say $H\in KW(p,q)$ if it is a semigroup of this form.  
It can be seen that if $H \in KW(p,q)$ of embedding dimension $n$, then there exist two sequences of integers $0<x_1<x_2<\dots<x_{n-2} \leq p/2$ and $q/2 \geq y_1>y_2> \dots > y_{n-2} >0$ such that $H$ is minimally generated by $\{ p,q, h_1, \ldots,h_n\}$ where $h_i = pq-x_ip-y_iq$. 
These are the semigroups introduced and studied in \cite{KW14}.  

In this section, we will prove that all KW semigroups have determinantal Betti numbers.  We will make use of two kinds of results about numerical semigroups having the same Betti numbers.  First is a Kunz's Semigroup Theorem \cite{Kunz87} which says that any two numerical semigroups lying in the interior of the same face of the Kunz Cone have the same Betti numbers. Second is a version of a result of Morales \cite{Mor87,Mor91} which says that the Betti numbers are not changed when we multiply all but one minimal generator by a suitable positive integer.  We will state and prove this in some generality for finitely generated semigroups in $\N^m$ in this section.

Let $a_i = (a_{i1},\ldots,a_{im}) \in \N^m$, $1\leq i \leq n$, minimally generate a semigroup $S = \la a_1,\ldots,a_n\ra$ in $\N^m$. Consider two polynomial rings $R = \k[t_1,\ldots,t_n]$, with the $S$-grading given by $\deg(t_i) = a_i \in \N^m$, and $\k[w_1,\ldots,w_m]$. The defining ideal of $S$ is the $S$-homogeneous binomial prime ideal $I_S = \ker (\varphi_S)$, where $\varphi_S: R \rightarrow \k[w_1,\ldots,w_m]$ is defined by $\varphi_S(t_i) = w_1^{a_{i1}}\dots w_m^{a_{im}}$. Using these notations, the semigroup ring of $S$, $\k[S] = \k[w^s \mid s\in S]$, is isomorphic to $R/I_S$ as graded $R$-modules.

\begin{lemma} \label{lem:lambda}
Let $a_i \in \mathbb N^m$ for $1\le i\le n$ minimally generate a semigroup in $\mathbb N^m$. Let $A = [a_1, \ldots, a_n]$ be the $m\times n$ matrix over $\mathbb N$.  Let $1\leq i \leq n$ such that $\lambda_i = \min \{ b > 0 \mid  ba_i = \sum_{j\neq i}b_ja_j,  b_j\in \mathbb Z\} \in \N$.  If  $AT = 0$ for some $T = [t_1,\ldots,t_n] \in \Z^n$, then $\lambda_i $ divides $t_i$. 
\end{lemma}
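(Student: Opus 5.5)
The plan is to show that the set $L_i = \{ b \in \Z \mid ba_i = \sum_{j\neq i} b_j a_j \text{ for some } b_j \in \Z\}$ is a subgroup of $\Z$, and then read off the claim from the fact that every subgroup of $\Z$ is generated by its least positive element. The hypothesis says exactly that $L_i$ has positive elements and that its least one is $\lambda_i$, so $L_i = \lambda_i\Z$.

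First I check that $L_i$ is a subgroup. It contains $0$: take all $b_j = 0$. It is closed under negation: negate all the $b_j$. It is closed under addition: if $ba_i = \sum_{j\ne i} b_j a_j$ and $b'a_i = \sum_{j\ne i} b'_j a_j$, then adding the two identities gives $(b+b')a_i = \sum_{j\neq i}(b_j+b'_j)a_j$. Hence $L_i$ is an ideal of $\Z$, so $L_i = d\Z$ for some $d \ge 0$.

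Next I identify $d$ with $\lambda_i$. Since $L_i$ contains a positive element, $d > 0$, and $d$ is the least positive element of $L_i$, which is $\lambda_i$ by definition. The Euclidean division argument makes this explicit: for $t \in L_i$, write $t = s\lambda_i + \rho$ with $0 \le \rho < \lambda_i$. Then $\rho = t - s\lambda_i \in L_i$, and minimality of $\lambda_i$ forces $\rho = 0$.

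Finally, suppose $AT = 0$, that is, $\sum_j t_j a_j = 0$. Then $t_i a_i = \sum_{j\neq i} (-t_j) a_j$ with integer coefficients, so $t_i \in L_i = \lambda_i \Z$, and therefore $\lambda_i \mid t_i$.

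There is no real obstacle here. The only point needing care is that the coefficients $b_j$ in the definition of $\lambda_i$ range over $\Z$ rather than $\N$; this is what makes $L_i$ closed under negation, and hence a group rather than merely a monoid.
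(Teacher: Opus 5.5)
Your proof is correct and is essentially the paper's argument. The paper also writes $t_i = \lambda_i q + r$, subtracts $q$ times a relation vector $Z$ with $z_i = \lambda_i$ so that $r a_i$ lies in the group generated by the other $a_j$, and concludes $r = 0$ by minimality; your phrasing as ``$L_i$ is a subgroup of $\Z$, hence $L_i = \lambda_i\Z$'' is the same step stated more cleanly.
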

\begin{proof}
By definition of $\lambda_i$, there is a $Z = [z_1,\ldots,z_n]^T \in \mathbb Z^n$ with $z_i = \lambda _i$ such that $AZ = 0$. 
Let $t_i = \lambda_i q+r$, $0\le r< \lambda_i$.  Then 
$A(T- qZ) = 0$.  But then $ra_i$ is in the group generated by the rest of the $a_j$ in $\mathbb Z^m$.  So, $r= 0$. 
\end{proof}

 \begin{theorem} \label{thm:Morales_Nm}
Let $a_i \in \mathbb N^m$ for $1\le i\le n$ minimally generate a semigroup in $\mathbb N^m$. Let $\lambda_i = \min \{ b > 0 \mid ba_i = \sum_{j\neq i}b_ja_j,  b_j\in \mathbb Z\}$.  If $\lambda_i \in \N$, then the Betti numbers of the semigroup $S= \la a_1, \ldots a_n \ra$ are the same as those of $S_i= \la a_1 \ldots, \lambda_i a_i, \ldots a_n \ra, 1\le i\le n$. 
\end{theorem}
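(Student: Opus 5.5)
We will compare the two defining ideals via a ring map. Write $R=\k[t_1,\ldots,t_n]$ with $\deg t_j=a_j$ and define $\psi:R\to R$ by $\psi(t_i)=t_i^{\lambda_i}$ and $\psi(t_j)=t_j$ for $j\neq i$. Then $\psi$ makes $R$ a free module over itself with basis $1,t_i,\ldots,t_i^{\lambda_i-1}$, so $\psi$ is flat. Our goal is to prove the equality of ideals
\[
I_S \;=\; \psi(I_{S_i})R .
\]
Once this holds, tensoring a minimal free resolution of $R/I_{S_i}$ along the flat map $\psi$ gives a free resolution of $R/I_S$. This resolution is still minimal, because $\psi$ sends the homogeneous maximal ideal into itself, so the entries of the differentials stay in the maximal ideal. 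Hence the ranks, which are the Betti numbers, coincide. (Gradings: if we give the source copy of $R$ the $S_i$-grading, then $\psi$ is degree preserving into the $S$-grading, so even the multigraded structure matches.)

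To prove $I_S=\psi(I_{S_i})R$, we use the standard fact that the lattice ideal of a semigroup is generated by the binomials $t^{u^+}-t^{u^-}$ with $u\in L=\ker_\Z A$. Similarly, $I_{S_i}$ is spanned by the binomials $t^{v^+}-t^{v^-}$ with $v\in L_i=\ker_\Z A_i$, where $A_i$ is $A$ with column $i$ replaced by $\lambda_i a_i$.

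The inclusion $\supseteq$ is immediate. If $v\in L_i$, put $u=v$ except $u_i=\lambda_i v_i$. Then $u\in L$, and $\psi(t^{v^+}-t^{v^-})=t^{u^+}-t^{u^-}\in I_S$.

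For $\subseteq$, take a binomial $t^{\alpha}-t^{\beta}\in I_S$, so $u=\alpha-\beta\in L$. By Lemma \ref{lem:lambda}, $\lambda_i\mid u_i$. The key point is that the binomial need not have disjoint supports, so we cannot simply divide $\alpha_i$ by $\lambda_i$. Instead, write $\alpha_i=\lambda_i c+r$ and $\beta_i=\lambda_i d+r'$ with $0\le r,r'<\lambda_i$. Since $\lambda_i\mid \alpha_i-\beta_i$, we get $r=r'$. Factor out $t_i^{r}$; the remaining binomial has $i$-th exponents $\lambda_i c$ and $\lambda_i d$. It is therefore $\psi$ of the binomial with $i$-th exponents $c$ and $d$ and the same other exponents, and that binomial lies in $I_{S_i}$ because its exponent difference lies in $L_i$. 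Hence $t^\alpha-t^\beta\in\psi(I_{S_i})R$.

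The main obstacle is exactly this divisibility bookkeeping, which is where Lemma \ref{lem:lambda} is used. The other point to check carefully is that flat base change preserves minimality, which holds because $\psi$ is a graded local homomorphism.
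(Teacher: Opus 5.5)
Your proposal is correct and follows the paper's proof. You use Lemma \ref{lem:lambda} to show that $I_S$ is the extension of $I_{S_i}$ under the substitution $t_i\mapsto t_i^{\lambda_i}$, and then base-change the minimal resolution of $R/I_{S_i}$ along that substitution; your version is more careful than the paper's sketch, since it checks flatness, the equality $I_S=\psi(I_{S_i})R$ (including binomials with overlapping supports), and preservation of minimality explicitly.
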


\begin{remark}
Note that if $\lambda_i = \infty$, i.e. $a_i$ does not belong to the group generated by $\{a_j \mid j\neq i\}$, then the defining ideals of $S$ and $S_i$ coincide, so $S$ and $S_i$ have the same Betti numbers.
\end{remark}

\begin{proof}
Denote by $I_S \subset \k[t_1,\ldots,t_n]$ the defining ideal of $S$ and $I_{S_i} \subset \k[z_1,\ldots,z_n]$ the defining ideal of $S_i$.
By Lemma \ref{lem:lambda}, if $f(t_1, \ldots, t_n)$ is a binomial in $I_S$, then $f = g(t_1,\ldots,t_i^{\lambda_i},\ldots,t_n)$ for some binomial $g$.  Let $F\to R/I_{S_i}$ be the minimal resolution with maps $\delta_k: F_k \to F_{k-1}$.   By replacing the $z_i$'s in the entries of the matrices of $\delta_k$ with $z_i^{\lambda_i}$ we get a resolution $F' \to R/I_S$.  
That is, if we map $\phi(t_i)= z_i^{\lambda_i}$ and $\phi(t_j) =z_j, j\neq i$,  we get an isomorphism  $$\phi: \k[t_1,\ldots,t_n]/I_S \cong   \k[z_1,\ldots, z_i^{\lambda_i}, \ldots, z_n]/I_{S_i}.$$  
This  induces  an isomorphism between the minimal resolutions.   So, the Betti numbers are equal. 
\end{proof}

\begin{theorem} [Morales Theorem] \label{thm:Morales}
Let $a_i \in \mathbb N$ for $1\le i\le n$ minimally generate a semigroup in $\mathbb N$. 
Then, the Betti numbers of the semigroup $S=\la a_1, \ldots a_n \ra$ are the same as those of $S_k= \la a_1, ka_2, \ldots, ka_n \ra$ as long as $\gcd(a_1, k) = 1$.  
\end{theorem}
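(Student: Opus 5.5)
Morales Theorem will follow from Theorem \ref{thm:Morales_Nm} applied with $m=1$ and the distinguished generator $a_1$, after dividing out a common factor. The plan is to pass from $S=\la a_1,\ldots,a_n\ra$ to $S_k=\la a_1,ka_2,\ldots,ka_n\ra$ by enlarging only the first generator by a factor of $k$, and then rescaling the whole semigroup by $k$.

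First I need to know when $\la a_1,ka_2,\ldots,ka_n\ra$ is still minimally generated by the listed elements. I expect this to hold because $\gcd(a_1,k)=1$. Suppose $a_1 = \sum_{j\ge 2} c_j k a_j$. Then $k \mid a_1$, which is impossible unless $k=1$. Suppose instead $ka_i = c_1a_1 + \sum_{j\ne 1,i} c_j k a_j$. Then $k\mid c_1a_1$, so $k\mid c_1$. Dividing by $k$ gives a relation expressing $a_i$ in terms of the other generators of $S$, which contradicts the minimality of the generators of $S$.

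Next, observe that the Betti numbers of a semigroup are unchanged if every generator is multiplied by the same positive integer $k$. The map $t_i\mapsto t_i$ identifies the two defining ideals, since $\sum c_i a_i=\sum d_i a_i$ holds exactly when $\sum c_i ka_i=\sum d_i ka_i$. Hence the Betti numbers of $S_k$ equal those of $T=\la ka_1,ka_2,\ldots,ka_n\ra$ modified in the first slot. More precisely, $S_k=\la a_1,ka_2,\ldots,ka_n\ra$ and $T=\la ka_1,\ldots,ka_n\ra$ differ only in the first generator, and $T$ is obtained from $S_k$ by multiplying $a_1$ by $k$.

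So I apply Theorem \ref{thm:Morales_Nm} to $S_k$ with $i=1$. This requires computing
\[
\lambda_1=\min\{b>0 \mid ba_1\in \textstyle\sum_{j\ge2}\Z\, ka_j\}.
\]
Let $g=\gcd(a_2,\ldots,a_n)$. The subgroup generated by the $ka_j$ is $kg\Z$, so $\lambda_1$ is the least $b>0$ with $kg\mid ba_1$.

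The main obstacle is that $\lambda_1$ need not equal $k$; it equals $kg/\gcd(kg,a_1)=k\cdot g/\gcd(g,a_1)$, using $\gcd(a_1,k)=1$. To handle this, I will apply the same reasoning to $S$ itself, where $\mu_1=g/\gcd(g,a_1)$. Theorem \ref{thm:Morales_Nm} then gives:
\begin{itemize}
\item $\beta(S)=\beta(\la \mu_1a_1,a_2,\ldots,a_n\ra)$;
\item $\beta(S_k)=\beta(\la k\mu_1a_1,ka_2,\ldots,ka_n\ra)=\beta(\la \mu_1a_1,a_2,\ldots,a_n\ra)$, where the last equality is the rescaling invariance above.
\end{itemize}
These two chains agree, so $\beta(S)=\beta(S_k)$.

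It remains to check the minimality hypotheses for the enlarged semigroups $\la \mu_1 a_1,a_2,\ldots,a_n\ra$ and $\la k\mu_1 a_1,ka_2,\ldots,ka_n\ra$. Alternatively, I can note that the proof of Theorem \ref{thm:Morales_Nm} only uses the isomorphism of defining ideals, so the Betti numbers are computed from the same ideal regardless of how the generators are presented.
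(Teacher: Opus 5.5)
Your proof is correct, and it differs from the paper's in a way that matters.

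\textbf{The paper's route and where it falls short.} The paper applies Theorem~\ref{thm:Morales_Nm} only once, to $S_k$ at the generator $a_1$. It asserts that $\gcd(a_1,k)=1$ forces $\lambda_1=k$, and then identifies $\la ka_1,\ldots,ka_n\ra$ with $S$ by rescaling. Your computation $\lambda_1=kg/\gcd(g,a_1)$, with $g=\gcd(a_2,\ldots,a_n)$, shows that this assertion holds only when $g\mid a_1$; for a numerical semigroup this means only when $g=1$. For instance, $S=\la 5,6,8\ra$ and $k=3$ give $S_k=\la 5,18,24\ra$ with $\lambda_1=6$. This case is not exotic. In the application in Proposition~\ref{prop:symmetry} the distinguished generator is $q$, and $\gcd(p,h_1,\ldots,h_{n-2})=\gcd(p,y_1,\ldots,y_{n-2})$ can exceed $1$.

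\textbf{What your route does instead.} You apply Theorem~\ref{thm:Morales_Nm} to $S$ with $\mu_1=g/\gcd(g,a_1)$ and to $S_k$ with $\lambda_1=k\mu_1$. You then observe that $\la k\mu_1a_1,ka_2,\ldots,ka_n\ra$ and $\la \mu_1a_1,a_2,\ldots,a_n\ra$ have the same defining ideal in $\k[z_1,\ldots,z_n]$. This meet-in-the-middle argument handles the general case at the price of one extra application. You also check that $a_1,ka_2,\ldots,ka_n$ minimally generate $S_k$, which Theorem~\ref{thm:Morales_Nm} requires and the paper leaves implicit.

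\textbf{Your closing concern.} It is settled by your own alternative. Theorem~\ref{thm:Morales_Nm} needs minimality only for the semigroup it is applied to, here $S$ and $S_k$. Meanwhile $\mu_1a_1$ really can fail to be a minimal generator (e.g.\ $S=\la 2,3\ra$, where $\mu_1=3$ and $6=3+3$). So the comparison should indeed be made at the level of the common presentation ideal, as you say.

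\textbf{A shorter repair of the one-step route.} The argument of Theorem~\ref{thm:Morales_Nm} only uses that $\lambda_i$ divides the $t_i$-exponents of the disjoint-support binomials generating $I_S$. It therefore works with any divisor of $\lambda_i$ in place of $\lambda_i$. Since $k\mid\lambda_1$ for $S_k$, this yields the paper's conclusion directly.
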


\begin{proof}
This is a direct consequence of Theorem \ref{thm:Morales_Nm}. Let $\lambda_1 = \min \{ b>0 \mid ba_1 = \sum_{j=2}^n b_j k a_j, b_j\in \Z\}$. Since $\gcd(a_1,k) = 1$, then $\lambda_1 = k$. Thus, by Theorem \ref{thm:Morales_Nm}, the Betti numbers of $S_k$ are the same as the Betti numbers of $\la ka_1,\ldots,ka_n\ra \cong  \la a_1,\ldots,a_n\ra$ and hence we get the result.
\end{proof}

Note that in this theorem, $a_1$ is any minimal generator of $S$. As a direct consequence, we get the following result.

\begin{corollary}
Under the hypothesis of Theorem \ref{thm:Morales}, let $I_S$ be the defining ideal of $S = \la a_1,\ldots,a_n\ra$ and $I_{S_k}$ be the defining ideal of $S_k = \la a_1,ka_2,\ldots,ka_n\ra$.
If $F \rightarrow R/I_S$ is the minimal resolution of $I_S$ and we set $F(k) = F \otimes_R \k[t_1^k,t_2,\ldots,t_n]$, then $F(k) \rightarrow R/I_{S_k}$ is the minimal resolution of $R/I_{S_k}$.

Thus, if $S= \la a_1, \ldots, a_n \ra $ is complete intersection, Gorenstein or determinantal so are the semigroups $S_k =  \la a_1,ka_2,\ldots,ka_n \ra$ as long as $\gcd (k,a_1) =1$.

If $F$ is the Frobenius number of $S$, $kF+a_1$ is the Frobenius number of $S_k$.
\end{corollary}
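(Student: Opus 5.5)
The plan is to derive all three assertions from the isomorphism built in the proof of Theorem \ref{thm:Morales_Nm}, applied as in the proof of Theorem \ref{thm:Morales}. First I will identify the ideals. By Lemma \ref{lem:lambda} applied to $S_k$, the index of $a_1$ is $\lambda_1=k$, because $\gcd(a_1,k)=1$. Hence every binomial of $I_{S_k}$ is a binomial in $t_1^k,t_2,\ldots,t_n$. Moreover, $t_1^{\alpha_1}\cdots t_n^{\alpha_n}-t_1^{\beta_1}\cdots t_n^{\beta_n}\in I_S$ if and only if its image under $\psi\colon t_1\mapsto t_1^k,\ t_j\mapsto t_j$ ($j\ge 2$) lies in $I_{S_k}$, since $k a_1\alpha_1+\sum_{j\ge 2}ka_j\alpha_j=k\bigl(\sum_j a_j\alpha_j\bigr)$. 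Because $I_{S_k}$ is spanned by binomials, this gives $I_{S_k}=\psi(I_S)R$, where $R=\k[t_1,\ldots,t_n]$.

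Next I will show that $F(k)$ is the minimal resolution of $R/I_{S_k}$. The map $\psi$ identifies $R$ with $R'=\k[t_1^k,t_2,\ldots,t_n]$, and $R$ is a free $R'$-module with basis $1,t_1,\ldots,t_1^{k-1}$, hence flat. Therefore $F\otimes_{R'}R$ is exact and resolves $R/\psi(I_S)R=R/I_{S_k}$. It stays minimal because substituting $t_1^k$ for $t_1$ keeps every matrix entry inside the homogeneous maximal ideal, and the ranks are unchanged. The gradings also match: $\deg t_1=a_1$ in $S_k$, so $\deg t_1^k=ka_1$, which is the rescaled degree of the old $t_1$ after multiplying all degrees by $k$.

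Then I will deduce the structural statements. Both rings are one-dimensional Cohen–Macaulay domains with the same Betti numbers.
\begin{itemize}
\item \emph{Complete intersection:} $I_{S_k}$ is generated by $\psi$ of a regular sequence, whose number of generators equals $\beta_1=n-1$, the codimension.
\item \emph{Gorenstein:} this is equivalent to the last Betti number being $1$.
\item \emph{Determinantal:} if $I_S=I_2(M)$ for a $2\times n$ matrix $M$, then $I_{S_k}=I_2(\psi(M))$ with the same codimension $n-1$, so the Eagon–Northcott complex of $\psi(M)$ is again the resolution.
\end{itemize}

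The Frobenius number goes through the Apéry set with respect to $a_1$. I claim $\mathrm{Ap}(S_k,a_1)=k\,\mathrm{Ap}(S,a_1)$.
\begin{itemize}
\item \emph{Distinct residues:} these $a_1$ elements are pairwise incongruent mod $a_1$ because $\gcd(k,a_1)=1$.
\item \emph{Membership:} take $w\in\mathrm{Ap}(S,a_1)$ and suppose $kw-a_1=ca_1+ks'$ with $c\ge 0$ and $s'\in\la a_2,\ldots,a_n\ra$. Then $k\mid c+1$. Writing $c+1=kd$ with $d\ge 1$ gives $w-a_1=(d-1)a_1+s'\in S$, which is a contradiction.
\end{itemize}
Consequently $F(S_k)=\max k\,\mathrm{Ap}(S,a_1)-a_1=k(F+a_1)-a_1=kF+(k-1)a_1$.

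The main obstacle is this last step. The computation yields $kF+(k-1)a_1$, which agrees with $kF+a_1$ only when $k=2$. For example, $\la 2,3\ra$ with $k=3$ gives $\la 2,9\ra$, whose Frobenius number is $7=3\cdot 1+2\cdot 2$, not $5$. So I would verify this carefully and, if it holds, record the corrected formula $kF+(k-1)a_1$ in place of the one stated.
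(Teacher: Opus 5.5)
Your proposal is correct. For the resolution, complete intersection, Gorenstein and determinantal assertions you follow the paper's own route. The paper gives no separate proof of this corollary; it calls it a direct consequence of Theorem~\ref{thm:Morales}, i.e.\ of the substitution isomorphism $t_1\mapsto t_1^k$ from the proof of Theorem~\ref{thm:Morales_Nm}. Your flatness argument ($R$ free over $\k[t_1^k,t_2,\ldots,t_n]$ with basis $1,t_1,\ldots,t_1^{k-1}$) is what makes the paper's loose ``$F\otimes_R\k[t_1^k,t_2,\ldots,t_n]$'' precise.

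There are two harmless slips, the first of which you inherit from the paper's proof of Theorem~\ref{thm:Morales}:
\begin{itemize}
\item For $S_k$ one has $\lambda_1=k\cdot\gcd(a_2,\ldots,a_n)$, which can exceed $k$. For example, $S=\la 5,6,8\ra$ with $k=2$ gives $\lambda_1=4$. All you actually use is $k\mid\alpha_1-\beta_1$ for $t^\alpha-t^\beta\in I_{S_k}$, and this follows from $\gcd(a_1,k)=1$ alone.
\item It is not every binomial of $I_{S_k}$ that is a polynomial in $t_1^k,t_2,\ldots,t_n$, only those with disjoint support. These generate $I_{S_k}$ because the ideal is prime, so $I_{S_k}=\psi(I_S)R$ still follows.
\end{itemize}

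On the Frobenius number you are right, and you can drop the hedge. Your Ap\'ery-set argument is complete: it gives $\mathrm{Ap}(S_k,a_1)=k\,\mathrm{Ap}(S,a_1)$, hence $F(S_k)=kF+(k-1)a_1$. The formula $kF+a_1$ in the statement is false for every $k\neq 2$. Already $k=1$ refutes it, since $S_1=S$ but $F+a_1\neq F$. Your example $\la 2,9\ra$, with Frobenius number $7$ rather than $5$, is correct. The paper offers no argument for this part of the corollary.

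Your computation also never uses whether $a_1\in\la a_2,\ldots,a_n\ra$. So the contrast drawn in the Remark after the corollary, where $kF+a_1(k-1)$ is attributed only to the gluing case, is spurious: $kF+(k-1)a_1$ is the correct value in both cases.
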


\begin{remark}
This is in contrast to the Gluing.  Indeed $S_k$ is not a gluing of $a_1$ with the rest because $a_1 \notin \la a_2, \ldots, a_n \ra$.  
If it was the gluing, then its Frobenius number would be $kF+a_1(k-1)$.
Moreover, while Gluing preserves the complete intersection and symmetric properties, it does not preserve the Betti numbers. In fact, the Betti numbers of $S_k = \la a_1,ka_2,\ldots,ka_n\ra$ are given by 
\[\beta_i (S_k) = \begin{cases}
\beta_i(S) & \text{if }   a_1\not\in \la a_2, \ldots, a_n \ra, \\
\beta_i (S)+\beta_{i-1}(S) & \text{if } a_1 \in \la a_2, \ldots, a_n \ra ,
\end{cases} \]
where the equality in the first case follows from Theorem \ref{thm:Morales}, and the equlity in the second case follows from \cite[Cor. 3.2]{GS19}.
\end{remark}

Recall that for a KW semigroup $H = \la p,q,h_1,\ldots,h_{n-2} \ra \in KW(p,q)$, with $h_i = pq-x_ip-y_iq$ for $1\leq i \leq n-2$, one has that $I_H$ is determinantal if and only if there exist positive integers $x,y$ such that $x_i=ix$ and $y_i = (n-1-i)y$ for all $1\leq i \leq n-2$. In \cite[Cor. 2.8]{GSS25}, the authors show that for any $H \in KW(p,q)$ with the same sequence of $y_i$'s as a determinantal KW semigroup, $H$ has determinantal Betti numbers. In the next result, we  show that the conclusion holds when $H$ has the same sequence of $x_i$'s as a determinantal KW semigroup.
 
\begin{proposition}\label{prop:symmetry} 
Let $H = \la p,q, h_1, \ldots, h_{n-2} \ra \in KW(p,q)$, $h_i = pq-x_ip-y_iq$ for $1\leq i \leq n-2$, be the semigroup defined by the sequences $0<x_1<x_2<\dots<x_{n-2} \leq q/2$ and $p/2 \geq y_1>y_2>\dots>y_{n-2}>0$.
\begin{enumerate}[(1)]
    \item\label{prop:symmetry_1} If there exists $x\in \Z_{>0}$ such that $(n-2)x \leq q/2$ and $x_i = ix$ for all $1\leq i \leq n-2$, then $H$ has determinantal Betti numbers, i.e. $\beta_i(\k[H]) = i {n \choose i+1}$, $1\leq i \leq n-1$.
    
    \item\label{prop:symmetry_2} If there exists $y\in \Z_{>0}$ such that $(n-2)y \leq p/2$ and $y_i = (n-2-i)y$ for all $1\leq i \leq n-2$, then $H$ has determinantal Betti numbers, i.e. $\beta_i(\k[H]) = i {n \choose i+1}$, $1\leq i \leq n-1$.
    
\end{enumerate}  
\end{proposition}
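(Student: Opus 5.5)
The plan is to obtain both parts from \cite[Cor. 2.8]{GSS25} and a symmetry that exchanges the roles of $(p,x)$ and $(q,y)$. Part (2) is exactly the statement recalled just before the proposition: the $y_i$ form the sequence of a determinantal KW semigroup. So \cite[Cor. 2.8]{GSS25} gives $\beta_i(\k[H]) = i{n\choose i+1}$ directly. I read the indexing $(n-2-i)y$ in (2) as the paper's convention $(n-1-i)y$; otherwise $y_{n-2}=0$, which is not allowed.

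For part (1), I would set up the substitution $p\leftrightarrow q$ together with $x_i \leftrightarrow y_{n-1-i}$. Define $\tilde x_i = y_{n-1-i}$ and $\tilde y_i = x_{n-1-i}$ for $1\le i\le n-2$. Then
\[ pq - \tilde y_i\, p - \tilde x_i\, q \;=\; pq - x_{n-1-i}\,p - y_{n-1-i}\,q \;=\; h_{n-1-i}. \]
The sequence $(\tilde x_i)$ is strictly increasing with $\tilde x_{n-2} = y_1 \le p/2$. The sequence $(\tilde y_i)$ is strictly decreasing with $\tilde y_1 = x_{n-2}\le q/2$. The hypothesis $x_i = ix$ becomes $\tilde y_i = (n-1-i)x$, which is the determinantal $y$-pattern. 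The integer $r$ is also symmetric in $p$ and $q$. Hence the same semigroup $H$, written as $\la q,p,h_{n-2},\ldots,h_1\ra$, is a "KW semigroup with respect to $(q,p)$" whose $y$-sequence is determinantal. The only asymmetry is the normalization $p<q$ in the definition of $KW(p,q)$.

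The key step, and the main obstacle, is to verify that the proof of \cite[Cor. 2.8]{GSS25} never uses $p<q$. That proof compares $H$ with a determinantal semigroup in the interior of the same face of the Kunz cone, via Kunz's theorem \cite{Kunz87}.

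I would check this through the explicit minimal generators of $I_H$ from \cite{KW17}. The ${n\choose 2}$ binomials have exponents determined entirely by the differences $x_j - x_i$, $y_i - y_j$ and by the quotients $p/2$ and $q/2$ relative to $r$. This description is invariant under the substitution above. Consequently, the ideal $I_H \subset \k[t_p,t_q,t_1,\ldots,t_{n-2}]$ computed from the data $(p,q,x,y)$ coincides, after renaming variables, with the ideal computed from $(q,p,\tilde x,\tilde y)$. The face-membership argument of \cite{GSS25} is phrased in terms of these relations, equivalently of Apéry set comparisons. It should therefore go through verbatim with the roles of $p$ and $q$ exchanged.

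If some step of \cite{GSS25} genuinely uses $p<q$, I would fall back on Theorem \ref{thm:Morales}. Choose an integer $k$ coprime to the relevant generator, and rescale all generators except one so as to reach a semigroup with the roles of the two small generators ordered as required. Theorem \ref{thm:Morales} preserves Betti numbers under this rescaling. The delicate point is to confirm that the rescaled semigroup is still KW with a determinantal $y$-sequence. Either route then yields $\beta_i(\k[H]) = i{n\choose i+1}$ for $1\le i\le n-1$.
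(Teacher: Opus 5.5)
\textbf{Verdict.} Part (2), and your reading of its indexing as $(n-1-i)y$, are fine. Your main route for part (1) has a genuine gap. Your fallback is the paper's proof and closes it once made explicit.

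\textbf{The gap in the main route.} Re-reading $H$ as a KW semigroup ``with respect to $(q,p)$'' is only bookkeeping. It is the same semigroup and the same ideal $I_H$, so the invariance of the \cite{KW17} generators under your substitution proves nothing new. The real content of \cite[Cor. 2.8]{GSS25} is \cite[Prop. 2.7]{GSS25} plus Kunz's theorem. That is, KW semigroups with the same $y$-sequence have the same Ap\'ery poset with respect to the \emph{multiplicity} $p$ (Proposition~\ref{prop:Apery_KW} shows this poset depends only on the $y_i$'s), hence lie in the same face of the Kunz cone for multiplicity $p$. After your swap, you must compare $H$ with a determinantal semigroup having the same $x$-sequence but a different $y$-sequence. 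These generally have different Ap\'ery posets with respect to $p$. The relevant Ap\'ery sets would instead be those with respect to $q$, which is not the multiplicity of $H$. So the argument does not go through ``verbatim''. You would need an Ap\'ery-set description of $H$ relative to $q$, plus a version of Kunz's theorem for Ap\'ery sets relative to a minimal generator that is not the multiplicity. You supply neither.

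\textbf{How to complete the fallback.} The point you call delicate is a one-line computation using your own substitution. Keep $q$ unscaled and choose $k$ with $\gcd(k,q)=1$ and $kp>q$. The paper writes $\gcd(k,p)=1$, but Theorem~\ref{thm:Morales} with $q$ as the fixed generator, and the requirement $\gcd(q,kp)=1$, both need $\gcd(k,q)=1$. Then
\[ kh_{n-1-i} = q(kp) - (ky_{n-1-i})\,q - x_{n-1-i}\,(kp), \qquad 1\le i\le n-2, \]
with $0<ky_{n-2}<\dots<ky_1\le kp/2$ and $q/2\ge x_{n-2}>\dots>x_1>0$. Hence $H_k=\la q,kp,kh_{n-2},\ldots,kh_1\ra$ is a genuine member of $KW(q,kp)$ with $q<kp$. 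Its $y$-sequence is $x_{n-1-i}=(n-1-i)x$ with $(n-2)x\le q/2$, which is the determinantal one. Part (2) therefore applies to $H_k$ as stated, and Theorem~\ref{thm:Morales} gives $\beta_i(\k[H])=\beta_i(\k[H_k])$.

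The scaling by $k$ replaces your unproved $p\leftrightarrow q$ symmetry. It makes $q$ the smaller of the two defining generators of a KW semigroup, so the multiplicity-based machinery behind part (2) can be used without modification.
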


\begin{proof}
Part \ref{prop:symmetry_2} is \cite[Cor. 2.8]{GSS25}. Let us prove \ref{prop:symmetry_1}.

Take $k$ such that $\gcd(k,p) = 1$ and $q<kp$. Then, $kh_i = q(kp) - q (ky_i) - (kp) x_i$, with $0 < ky_{n-2} < ky_{n-1} < \dots < ky_1 \leq kp/2$ and $q/2 \geq x_{n-2} > x_{n-1} > \dots > x_1 > 0$.
So, $H_k = \la q, kp, kh_{n-2},\ldots,kh_1 \ra \in KW(q,kp)$. 

Note that by hypothesis, $h_{n-1-i} = q(kp)-q (ky_{n-1-i})-kp(n-2-i)x$, so $H_k$ has determinantal Betti numbers by part \ref{prop:symmetry_2}. To conclude the result, note that the Betti numbers of $H$ are the same as those of $H_k$, by Theorem \ref{thm:Morales}.
\end{proof}

\begin{theorem}\label{KWconj}
The Betti numbers of every $KW$ semigroup $H$ of embedding dimension $n$ are 
\[\beta_i(\k[H]) = i{n\choose i+1}, \ 1\leq i \leq n-1.\]
\end{theorem}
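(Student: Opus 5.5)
Our plan is to reduce an arbitrary KW semigroup, by two moves that preserve Betti numbers, to one of the special shapes handled in Proposition \ref{prop:symmetry}. The first move is Kunz's theorem \cite{Kunz87}: two semigroups in the interior of the same face of the Kunz cone, i.e.\ with the same Ap\'ery poset relative to the multiplicity, have the same Betti numbers. The second move is Theorem \ref{thm:Morales}: multiplying all but one generator by a suitable $k$ does not change the Betti numbers.

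Let $H=\la p,q,h_1,\ldots,h_{n-2}\ra$ with data $(x_i)$ and $(y_i)$, where $h_i=pq-x_ip-y_iq$, and fix the multiplicity $p$. The elements of the Ap\'ery set $\mathrm{Ap}(H,p)$ are of the form $aq+h_i$ or $aq$. The divisibility relations among them are governed by a few kinds of comparison. One compares $h_i+h_j$ with elements $h_k+cq$ (or $cq$), which depends on the $x$'s through $x_i+x_j-x_k$. One also compares multiples of $q$ among themselves. Concretely, $h_i+h_j = h_k+(p-y_i-y_j+y_k)q-(x_i+x_j-x_k)p$ when the relevant quantities are nonnegative.

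The key claim to establish first is that, once $p$ and the $y$-sequence are fixed, the poset $\mathrm{Ap}(H,p)$ depends only on the relative order data of the $y$'s and on which sums $y_i+y_j$ exceed $p/2$-type thresholds. It should be essentially independent of the $x$-sequence, because the $x$'s only contribute multiples of $p$, which vanish modulo $p$. Thus, changing $x_i$ to $i\cdot x$ while keeping $p$, the $y_i$, and the residues $h_i \bmod p$ fixed should not change the poset. Here we need to keep the residues fixed, so it is not literally the same $h_i$; instead we vary $q$. To keep the residues under control, we first replace $q$ by a much larger $q$ in the same class modulo $p$. This places $H$ in a new $KW(p,\tilde q)$ in which the $x_i$ can be chosen freely up to $\tilde q/2$.

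We expect this step to be the main obstacle. One has to check that the Ap\'ery elements keep their representations $a\tilde q+h_i$ with the same $a$ and the same minimal relations, so that the two semigroups lie in the interior of the same Kunz face. One also has to ensure that the $p/2$ bound on the $y$'s still applies. If the poset genuinely depends on the $x$'s, we would instead fix the multiplicity to be $q$. In that setting the $y$'s become the invisible coordinate, and we can normalize $y_i=(n-1-i)y$.

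With the normalization done, the target semigroup has $x_i=ix$ (or arithmetic $y$'s), so Proposition \ref{prop:symmetry} gives the Betti numbers $i\binom{n}{i+1}$. If the modification of $q$ or $p$ requires scaling, we invoke Theorem \ref{thm:Morales} with $k$ coprime to the fixed generator, as in the proof of Proposition \ref{prop:symmetry}\ref{prop:symmetry_1}, to transfer the Betti numbers back. Combining Kunz's theorem with these equalities yields $\beta_i(\k[H])=i\binom{n}{i+1}$ for all $H\in KW(p,q)$.
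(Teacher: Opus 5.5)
Your strategy is the paper's: fix the multiplicity $p$, use Kunz's theorem to replace the $x$-sequence by an arithmetic one while keeping the $y_i$, and finish with Proposition \ref{prop:symmetry}\ref{prop:symmetry_1}. But the one step that carries the content is that this replacement keeps $H$ in the interior of the same face of the Kunz cone. You leave it as an expectation (``should be essentially independent'', ``the main obstacle'', plus a fallback in case it fails), so as written there is a gap, and your heuristic does not close it. Equal residues $h_i\equiv -y_iq \pmod p$ only fix the labels. The poset records which differences of the actual Ap\'ery elements lie in $H$, and your own identity shows that comparing $h_i+h_j$ with $h_k+cq$ involves $x_i+x_j-x_k$.

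The missing ingredient is the explicit structure of the Ap\'ery set, Proposition \ref{prop:Apery_KW} (from \cite{GSS25}). There ${\rm Ap}(H)$ is the chain $\lambda q$, $0\le\lambda<p-y_1$, together with the chains $h_i+\lambda q$, $0\le\lambda<y_i-y_{i+1}$. The only covering relations are the steps along these chains and $\lambda q\precdot h_i+\lambda q$. In particular no sum $h_i+h_j$ is an Ap\'ery element, since everything above $h_i$ has the form $h_i+\lambda q$. So your $x$-dependent comparisons never yield relations. Both the labels ($\lambda q$ and $(\lambda-y_i)q$ modulo $p$) and the relations depend only on $p$, $q \bmod p$ and the actual values of the $y_i$, not merely their relative order. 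This is \cite[Prop. 2.7]{GSS25}, which is exactly what the paper cites at this point.

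Your detour through a larger $\tilde q\equiv q\pmod p$ is unnecessary, and it would add a second face-preservation claim to verify. Changing the $x_i$ with $q$ fixed does not move the residues $h_i \bmod p$ at all. Moreover, since $0<x_1<\dots<x_{n-2}\le q/2$ are integers, you have $n-2\le x_{n-2}\le q/2$, so $x_i=i$ (that is, $x=1$) is already admissible in $KW(p,q)$.

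The paper takes $H'=\la p,q,h'_1,\ldots,h'_{n-2}\ra$ with $h'_i=pq-ip-y_iq$. Then $H$ and $H'$ share a Kunz face, hence have the same Betti numbers by \cite{Kunz87}, and $H'$ has determinantal Betti numbers by Proposition \ref{prop:symmetry}\ref{prop:symmetry_1}. No separate appeal to Theorem \ref{thm:Morales} is needed at this level, since it is already used inside that proposition.

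Your fallback of working ``with multiplicity $q$'' is not directly available anyway. $H$ has multiplicity $p$, and reaching a multiplicity-$q$ setting requires the rescaling to $H_k\in KW(q,kp)$ carried out in the proof of Proposition \ref{prop:symmetry}.
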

\begin{proof}

Let $H$ be a $KW$ semigroup of embedding dimension $n$.    So, $H = \la p,q,h_1,\ldots,h_{n-2} \ra \in KW(p,q)$ for some relatively prime positive integers $p<q$.  Further, there exist positive integers $x_1 < \ldots <x_{n-2} \le q/2$ and $y_{n-2} < \ldots y_1 \le p/2$ such that $h_i = pq-x_ip-y_iq$, $1\leq i \leq n-2$. 

Let $H' = \la p,q, h'_1,\ldots,h_{n-2}' \ra$, where $h'_i= pq-ip - y_i q$, $1\leq i \leq n-2$. Then $H' \in KW(p,q)$ and the semigroups $H$ and $H'$ are defined by the same sequence of $y_i$'s. Hence, they belong to the interior of the same face of the Kunz cone for the semigroups with multiplicity $p$, by \cite[Prop. 2.7]{GSS25}. Therefore, by Kunz's Semigroup theorem \cite{Kunz87}, $H$ and $H'$ have the same Betti numbers. But $H'$ has determinantal Betti numbers, by part \ref{prop:symmetry_1} of Proposition \ref{prop:symmetry}. So, $H$ has determinantal Betti Numbers. 
\end{proof}

\begin{remark}
Above Theorem \ref{KWconj} shows that the Betti numbers of a KW semigroup are determinantal and depend only on the embedding dimension. In particular, they are independent of the multiplicity.  For example the semigroup $H_k$ in the proof of Proposition \ref{prop:symmetry} has multiplicity $q$ and $H$ has multiplicity $p$.  So they are not even in the same Kunz cone!

\end{remark}

This raises the question of whether the Betti numbers are the same for all the semigroups of the form $\la p,q,h_1,\ldots,h_{n-2} \ra $ with no restrictions on the $h_i$'s. 

As Ap\'ery posets determine which semigroups are in the interior of the same face of the Kunz Cone, in the next section we analyze the Ap\'ery posets of KW semigroups. 

\section{Ap\'ery poset of KW semigroups} \label{sec:Apery}

Let $S$ be a numerical semigroup with multiplicity $m = \min (S\setminus \{0\})$. The {\it Ap\'ery set} of $S$ is ${\rm Ap}(S) = \{b \in S \mid b-m \notin S\}$, and one can write ${\rm Ap}(S) = \{b_0:=0,b_1,...,b_{m-1}\}$, where $b_i$ is the least element of $S$ congruent to $i$ modulo $m$.

\begin{definition}
The \textit{Ap\'ery poset} of $S$ is $\mathcal{P}(S) = (\Z_m , \preceq)$, where $i \preceq j$ if and only if $b_j-b_i \in S$ for $i,j \in \Z_m $. 
We write $i \precdot j$ and say $j$ {\it covers} $i$ if $i \prec j$ and there is no $k$ such that $i \prec k \prec j$. 
\end{definition}

The aim of this section is to provide a complete characterization of the Ap\'ery poset of Kunz-Waldi semigroups.
For convenience, first we recall here what the Ap\'ery set and the Ap\'ery poset of a Kunz-Waldi semigroup $H$ look like.

\begin{proposition}[{\cite[Prop. 2.1 and 2.6]{GSS25}}] \label{prop:Apery_KW}
Let $H = \langle p,q,h_1,\ldots,h_{n-2} \rangle \in KW(p,q)$ be the semigroup defined by the sequences $x_1<x_2<\dots < x_{n-2}$ and $y_1>y_2>\dots > y_{n-2}$, and set $y_{n-1} := 0$. The Ap\'ery set of $H$ is
\[{\rm Ap}(H) = \{\lambda q \mid 0\leq \lambda < p-y_1 \} \cup \left( \cup_{i=1}^{n-2} \{h_i +\lambda q \mid 0\leq \lambda < y_i-y_{i+1}\} \right) \, .\]

For $0 \leq \lambda < p-y_1$ set $i_{0,\lambda}$ the label of $\lambda q$ in $\mathcal{P}(H)$, i.e. $0\leq i_{0,\lambda} < p$ and $i_{0,\lambda} \equiv \lambda q \pmod{p}$. 
Similarly, for each $1\leq j \leq n-2$ and $0\leq \lambda < y_j-y_{j+1}$, let $i_{j,\lambda}$ be the label of $h_j+\lambda q$ in $\mathcal{P}(H)$.
Then, the covering relations in ${\mathcal P}(H)$ are given by 
\[i_{j_1,k_1} \precdot i_{j_2,k_2} \Leftrightarrow \left\{ 
\begin{array}{lcl}
 j_2 = j_1 & \text{and} & k_2=k_1+1 \text{, or} \\
 j_1=0 & \text{and} & k_2 = k_1 \, .
\end{array}\right.\]
\end{proposition}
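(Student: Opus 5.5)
We prove the description of ${\rm Ap}(H)$ first and then read off the covering relations from it.

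\emph{The Apéry set.} Put $y_0 := p$, with $y_{n-1}=0$ as in the statement. The listed set has
\[(p-y_1)+\sum_{i=1}^{n-2}(y_i-y_{i+1}) = p\]
elements, so it is enough to show two things: the elements are pairwise incongruent modulo $p$, and each lies in ${\rm Ap}(H)$, i.e.\ subtracting $p$ leaves $H$.

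Every element of $H$ has the form $ap+bq+\sum c_ih_i$. Modulo $p$ we have $h_i\equiv -y_iq$, so the listed elements are congruent to $\lambda q$ and to $(\lambda-y_j)q$ respectively. We write each residue as $\mu q$ with $0\le \mu<p$.
\begin{itemize}
\item The elements $\lambda q$ give $\mu\in[0,p-y_1)$.
\item The elements $h_j+\lambda q$ give $\mu=p-y_j+\lambda\in[p-y_j,\,p-y_{j+1})$.
\end{itemize}
These intervals tile $[0,p)$. Since $\gcd(p,q)=1$, the residues are pairwise distinct.

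For minimality, write $w$ for the listed element of residue class $\mu$; we must show it is the least element of $H$ in that class. Take any element $s=ap+bq+\sum c_ih_i$ of $H$ with $s\equiv w \pmod p$, and reduce modulo $p$. Every element of $H$ is at least $\sum c_i h_i+bq$. The key inequality is
\[
h_i+h_j \ \ge\ h_{\min(i,j)}+(y_{\max(i,j)}\text{-related multiple of } q)+p\cdot(\text{something}\ge 0).
\]
More concretely, the identity is
\[
h_i+h_j = pq-(x_i+x_j)p-(y_i+y_j)q + pq.
\]
Since $x_i+x_j\le q$ and $y_i+y_j\le p$, one shows that $h_i+h_j\ge h_k+\lambda q+p$ for the appropriate listed element of the same residue. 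This is where the bounds $x_{n-2}\le q/2$ and $y_1\le p/2$ enter.

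So the plan is to show that any expression using at least two $h$'s, or one $h_j$ together with $bq$ for $b\ge y_j-y_{j+1}$, is dominated by a listed element plus a positive multiple of $p$. I expect this comparison to be the main obstacle, since it requires careful case analysis on which interval the residue $\mu$ falls into.

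\emph{The covering relations.} Once the Apéry set is known, $i\preceq i'$ holds iff $w'-w\in H$. The claimed covers are clear:
\begin{itemize}
\item $w'-w=q$ along each chain ($j_2=j_1$, $k_2=k_1+1$);
\item $w'-w=h_{j}$ from $\lambda q$ to $h_j+\lambda q$ ($j_1=0$, $k_2=k_1$).
\end{itemize}
Both differences are minimal generators, and a difference equal to a minimal generator is always a cover.

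Conversely, suppose $w'-w\in H\setminus\{0\}$. Then $w'-w$ decomposes as a sum of generators, each partial sum staying inside ${\rm Ap}(H)$, because ${\rm Ap}(H)$ is closed under taking summands. So it suffices to know which generator additions keep us inside ${\rm Ap}(H)$:
\begin{itemize}
\item $p$ never does;
\item $q$ does exactly along the chains;
\item $h_j$ does only from $\lambda q$ with $\lambda<y_j-y_{j+1}$.
\end{itemize}
The last point follows since $h_j+h_k$ and $h_j+\lambda q$ for larger $\lambda$ fall outside the Apéry set by the inequality above.

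Thus every relation is a composite of the listed steps, and the listed steps are exactly the covers.
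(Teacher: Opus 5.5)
\textbf{There is a genuine gap: you never prove that each listed element is the least element of $H$ in its residue class.} Your residue computation and tiling argument are correct. So is the reduction in the covering part: ${\rm Ap}(H)$ is closed under summands, so every relation factors into single-generator steps, and a step by a minimal generator is a cover. But the minimality step is the real content of the Ap\'ery-set claim, and it is missing. The displayed ``key inequality'' is not a precise statement. The claim ``one shows that $h_i+h_j\ge h_k+\lambda q+p$'' is asserted without argument, and you leave the comparison as an expected obstacle. Your three bullets in the covering part rest on the same unproved claim.

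\textbf{The missing idea is short.} Your identity says $h_i+h_j=(q-x_i-x_j)p+(p-y_i-y_j)q\in\langle p,q\rangle$, because $x_i+x_j\le q$ and $y_i+y_j\le p$. Hence every element of $H$ has the form $ap+bq$ or $ap+bq+h_j$ with $a,b\ge 0$. Fix the class of $\mu q$, $0\le \mu<p$.
\begin{itemize}
\item Elements of the first form are $\ge \mu q$.
\item For fixed $j$, elements of the second form are $\ge h_j+bq$, where $b\ge 0$ is minimal with $b\equiv \mu+y_j \pmod p$. This equals $\mu q+(q-x_j)p>\mu q$ if $\mu+y_j<p$, and $\mu q-x_jp$ if $\mu+y_j\ge p$.
\end{itemize}
Since the $x_j$ increase and the $y_j$ decrease, the minimum is $\mu q$ when $\mu<p-y_1$. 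Otherwise it is $\mu q-x_jp=h_j+(\mu-p+y_j)q$ for the unique $j$ with $p-y_j\le \mu<p-y_{j+1}$. This is exactly your list. The same formulas give your bullets. For $y_j-y_{j+1}\le\lambda<y_j$ one has $h_j+\lambda q=h_k+\lambda' q+(x_k-x_j)p$ with $k>j$ and $h_k+\lambda'q$ listed. For $\lambda\ge y_j$ one has $h_j+\lambda q=(q-x_j)p+(\lambda-y_j)q$.

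\textbf{Your domination claim fails in a boundary case, and so does the stated cover list.} You claim that a sum involving two $h$'s exceeds a listed element by a positive multiple of $p$. This needs $x_i+x_j<q$. It fails when $i=j=n-2$, $q$ is even and $x_{n-2}=q/2$, which the hypotheses allow. Then $2h_{n-2}+\lambda q=(p-2y_{n-2}+\lambda)q$, and this lies in ${\rm Ap}(H)$ whenever $\lambda<2y_{n-2}-y_1$.

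For instance, take $H=\langle 7,10,25\rangle\in KW(7,10)$, with $x_1=5=q/2$ and $y_1=1$. It has ${\rm Ap}(H)=\{0,10,20,30,40,50,25\}$, as predicted. But $50-25=25\in H$, which gives the cover $i_{1,0}\precdot i_{0,5}$ between the labels $4$ and $1$ of $25$ and $50$. This cover is not in the stated list. So your bullet about adding $h_j$ is false here. The cover description itself needs this boundary case excluded; it fails whenever $q$ is even, $x_{n-2}=q/2$ and $2y_{n-2}>y_1$. A complete proof has to make explicit where $x_i+x_j<q$ is used.
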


\begin{example} \label{ex:Apery}
Consider the semigroup $H = \la 8,17,60,69,78\ra \in KW(8,17)$, defined by the sequences $x_1=1<x_2=2<x_3$ and $y_1=4>y_2=3>y_3=2$. Its Ap\'ery set is ${\rm Ap}(H) = \{0, 17,34,51,60,69,78,95\}$ and the Hasse diagram of ${\mathcal P}(H)$ is shown in Figure \ref{fig:example_Hasse}.
\end{example}

\begin{figure}[htbp]
\centering
\begin{tikzpicture}
    \node (0) at (0,0) {$0$};
    \node (17) at (-1,1) {$1$};
    \node (34) at (-1,2) {$2$};
    \node (51) at (-1,3) {$3$};
    \node (60) at (0,1) {$4$};
    \node (69) at (1,1) {$5$};
    \node (78) at (2,1) {$6$};
    \node (95) at (1.5,2) {$7$};
    \draw (0) -- (17) -- (34) -- (51);
    \draw (0) -- (60);
    \draw (0) -- (69);
    \draw (0) -- (78);
    \draw (78) -- (95);
    \draw (95) -- (17);
\end{tikzpicture}
\caption{Hasse diagram of the poset $\mP(H)$ in Example \ref{ex:Apery}.}
\label{fig:example_Hasse}
\end{figure}

The following theorem characterizes the Ap\'ery poset of Kunz-Waldi semigroups.

\begin{theorem} \label{thm:Apery_KW}
Let $p\geq 3$ be a prime number and $\mP = \left( \Z_p, \preceq \right)$ be a poset on 
\[ \Z_p = \{\lambda a \mid 0 \leq \lambda \leq \ell_0\} \cup \left( \cup_{i=1}^{n-2} \{k_i +\lambda a \mid 0 \leq \lambda \leq \ell_i \}\right) \]
for some $a\in \Z_p$ with $\gcd(a,p) = 1$, $\ell_i \in \N$, $\ell_0\geq 1$ and $k_i \in \Z_p$ (all the $k_i$'s different) such that $p = ( \sum_{i=0}^{n-2} \ell_i ) + n-1$. Then, $\mP$ is the Ap\'ery poset of a semigroup $H \in KW(p,q)$ for some $q>p$ coprime with $p$ if and only if:
\begin{enumerate}[(1)]
    \item\label{thm:Apery_KW_1} $\ell_i < \ell_0$ for all $1\leq i \leq n-2$, and the covering relations in $\mP$ are exactly the following
    \[\left\{ \begin{array}{ll}
      ja \precdot (j+1)a,   & 0 \leq j \leq \ell_0-1; \\
      k_i+j a \precdot k_i + (j+1)a,   & 1\leq i \leq n-2, \, 0 \leq j \leq \ell_i-1; \\
      j a \precdot k_i + ja,   & 1\leq i \leq n-2, \, 0 \leq j \leq \ell_i-1; \\
    \end{array} \right.\]
    i.e., the Hasse diagram of $\mP$ is as shown in Figure~\ref{fig:Hasse}.
    Hence, the poset $\mP$ is graded.
    
    \item\label{thm:Apery_KW_2} $y_1 \leq \lfloor \frac{p}{2} \rfloor$, where $y_1 := \left( \sum_{i=1}^{n-2} \ell_i \right) + n-2 = p-1-\ell_0$.
    
    \item\label{thm:Apery_KW_3} There exist natural numbers $0< y_{i_1} ,y_{i_2},\ldots , y_{i_{n-2}} \leq y_1$ such that \[-a y_{i_j} \equiv k_j \pmod{p} , \quad 1\leq j \leq n-2,\]
    and there exists $1\leq j \leq n-2$ such that $y_{i_j} = y_1$.\\
    If this condition holds, rearrange the $k_i$'s (and accordingly the $\ell_i$'s) so that $y_1 = y_{i_1}$ and $y_1 > y_{i_2} > \dots > y_{i_{n-2}}$, and denote $y_j = y_{i_j}$, $2\leq j \leq n-2$.

    \item\label{thm:Apery_KW_4} $\ell_i = y_i - y_{i+1}-1$ for all $1\leq i \leq n-2$, where $y_{n-1}:= 0$.
    
\end{enumerate}

If the properties \ref{thm:Apery_KW_1}-\ref{thm:Apery_KW_4} hold, take $q = \alpha p+a$ for some $\alpha\in \Z_{>0}$, and let $H = \la p,q,h_1,\ldots,h_{n-2} \ra$ defined by $h_i = pq-x_ip-y_iq$, for some sequence $0<x_1<x_2<\dots<x_{n-2} \leq q/2$ (e.g., $x_i = i$). Then, $\mP = \mP(H)$ is the Ap\'ery poset of $H$.
\end{theorem}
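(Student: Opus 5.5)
Both directions rest on Proposition \ref{prop:Apery_KW}, which describes the Ap\'ery set and the covering relations of $\mP(H)$ for every $H\in KW(p,q)$. The strategy is to check that the element labels and chain lengths in that description are exactly the data $a,k_i,\ell_i$ of the statement. Here $a \equiv q \pmod p$.

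\emph{Necessity.} Let $\mP=\mP(H)$ with $H=\la p,q,h_1,\ldots,h_{n-2}\ra\in KW(p,q)$ and $h_i=pq-x_ip-y_iq$. Set $a\equiv q$, $k_i\equiv h_i\equiv -y_iq \pmod p$, $\ell_0=p-y_1-1$ and $\ell_i=y_i-y_{i+1}-1$.
\begin{enumerate}[(i)]
\item By Proposition \ref{prop:Apery_KW}, the chain $\{\lambda q\}$ has labels $\lambda a$ for $0\le\lambda\le\ell_0$. Each chain $\{h_i+\lambda q\}$ has labels $k_i+\lambda a$ for $0\le \lambda\le \ell_i$. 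Counting elements gives $p=\sum_i\ell_i+n-1$.
\item The covering relations listed in Proposition \ref{prop:Apery_KW} are precisely those of \ref{thm:Apery_KW_1}: the index $j_1=0$, $k_2=k_1$ gives $ja\precdot k_i+ja$.
\item Since $y_i\le y_1\le p/2$, we have $\ell_i\le y_1-1<p-y_1-1=\ell_0$. This gives $\ell_i<\ell_0$, and it also gives \ref{thm:Apery_KW_2}.
\item Condition \ref{thm:Apery_KW_3} holds with $y_{i_j}=y_j$, and \ref{thm:Apery_KW_4} holds by the definition of $\ell_i$.
\end{enumerate}
One wrinkle remains: the hypothesis takes $p$ prime, while $KW(p,q)$ also covers the case where $p+q$ or $q$ is even. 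Then $2r=p+q$ and the bound in \ref{thm:Apery_KW_2} is still $y_1\le \lfloor p/2\rfloor$, by the parametrization recalled in the introduction.

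\emph{Sufficiency.} Given \ref{thm:Apery_KW_1}--\ref{thm:Apery_KW_4}, put $q=\alpha p+a$, choose $0<x_1<\dots<x_{n-2}\le q/2$, and set $h_i=pq-x_ip-y_iq$. The $y_i$ satisfy $\lfloor p/2\rfloor\ge y_1>\dots>y_{n-2}>0$ by \ref{thm:Apery_KW_2} and \ref{thm:Apery_KW_3}. So by \cite[Rem. 1]{SS25}, $H\in KW(p,q)$ with embedding dimension $n$, where $\gcd(p,q)=\gcd(p,a)=1$.

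It remains to show $\mP(H)=\mP$, and Proposition \ref{prop:Apery_KW} again does the work. It gives the chains $\lambda q$ for $0\le\lambda<p-y_1$ and $h_i+\lambda q$ for $0\le\lambda<y_i-y_{i+1}$. By \ref{thm:Apery_KW_2} and \ref{thm:Apery_KW_4}, these have lengths $\ell_0+1$ and $\ell_i+1$. Their labels are $\lambda a$ and $-y_ia+\lambda a=k_i+\lambda a$ by \ref{thm:Apery_KW_3}. So the labelled ground sets agree. The covering relations of $\mP(H)$ from Proposition \ref{prop:Apery_KW} then coincide with those imposed in \ref{thm:Apery_KW_1}. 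A finite poset is determined by its covers, so $\mP=\mP(H)$.

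\emph{Main obstacle.} The delicate step is the bookkeeping around the hypothesis on $\mP$. The $k_i$ are given only as residues, and the $y_{i_j}$ must be recovered uniquely from $-ay\equiv k_j$ with $0<y\le y_1<p$. We must also check that the reordering in \ref{thm:Apery_KW_3} matches the $\ell_i$ consistently with \ref{thm:Apery_KW_4}. Uniqueness of $y$ holds because $a$ is invertible mod $p$ and $y_1<p$. Distinctness of the $k_i$ gives distinct $y$'s, so the strict decreasing order is well defined.

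We also need the nonredundancy of the $h_i$ as generators, needed so that Proposition \ref{prop:Apery_KW} applies. This comes from the strict monotonicity of both sequences $x_i$ and $y_i$, as in \cite[Rem. 1]{SS25}.
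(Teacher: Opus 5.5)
Your proposal is correct and follows the same route as the paper. For necessity you read off $a\equiv q$, $k_i\equiv h_i$, $\ell_0=p-y_1-1$ and $\ell_i=y_i-y_{i+1}-1$ from Proposition~\ref{prop:Apery_KW}; for sufficiency you build $H$ with $q=\alpha p+a$ and $h_i=pq-x_ip-y_iq$ and apply the same proposition again. Two small points: your bound $\ell_i\le y_1-1<p-y_1-1=\ell_0$ (strict because $p$ is odd) is more careful than the paper's estimate $\ell_i\le\lfloor p/2\rfloor-2$, which fails when $n=3$; and Proposition~\ref{prop:Apery_KW} actually gives the covers $ja\precdot k_i+ja$ for $0\le j\le \ell_i$, so your claim that the covers agree with condition~\ref{thm:Apery_KW_1} holds only after correcting the printed range $0\le j\le\ell_i-1$ there, a typo shared by the paper's statement and proof.
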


\begin{figure}[htbp]
\centering
\includegraphics[width=0.5\textwidth]{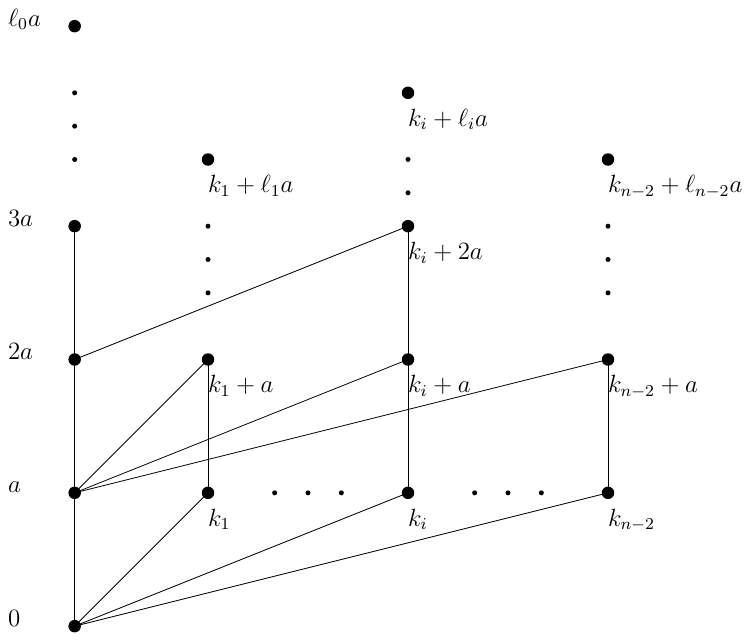}
\caption{Hasse diagram of a poset $\mP$ satisfying condition \ref{thm:Apery_KW_1} in Theorem \ref{thm:Apery_KW}.}
\label{fig:Hasse}
\end{figure}

\begin{proof}
Take $H \in KW(p,q)$ for some $q>p$ coprime with $p$. Then, $H = \la p,q,h_1,\ldots,h_{n-2} \ra$, where for all $1\leq i \leq n-2$, $h_i = pq-x_ip-y_iq$, for some $0<x_1<x_2< \dots <x_{n-2} \leq p/2$ and $q/2 \geq y_1 > y_2 > \dots > y_{n-2} > 0$. By Proposition \ref{prop:Apery_KW}, setting $y_{n-1} := 0$, one has that 
\[{\rm Ap}(H) = \{\lambda q \mid 0\leq \lambda < p-y_1 \} \cup \left( \cup_{i=1}^{n-2} \{h_i +\lambda q \mid 0\leq \lambda < y_i-y_{i+1}\} \right) .\]
We write $\ell_0 = p-y_1-1$ and $\ell_i = y_i-y_{i+1} -1$ for all $1\leq i \leq n-2$, and let $0\leq a,k_i \leq p-1$ such that $a \equiv q \pmod{p}$ and $k_i \equiv \ell_i \pmod{p}$ for all $i$. Then, it is clear that $p = (\sum_{i=0}^{n-2} \ell_i) +n-1$ and $\ell_i<\ell_0$ for all $1\leq i \leq n-2$, since $\ell_0 = p-y_1-1 \geq \lfloor p/2 \rfloor -1$ and $\ell_i \leq \lfloor p/2 \rfloor -2$. Hence, \ref{thm:Apery_KW_1} follows from Proposition \ref{prop:Apery_KW}, and \ref{thm:Apery_KW_2}-\ref{thm:Apery_KW_4} hold by the definitions above. 

Conversely, consider a poset $\mP$ satisfying properties \ref{thm:Apery_KW_1}-\ref{thm:Apery_KW_4}. Take $q = \alpha p+a$ for some $\alpha\in \Z_{>0}$, and let $H = \la p,q,h_1,\ldots,h_{n-2} \ra$ defined by $h_i = pq-x_i-y_iq$, for any sequence $0<x_1<x_2<\dots<x_{n-2} \leq q/2$. Since $\gcd(p,q) = \gcd(a,p) = 1$, one has that $H \in KW(p,q)$ and the Ap\'ery poset of $H$ is $\mP$, again by Proposition \ref{prop:Apery_KW}.
\end{proof}

By \cite[Thm. 3.10]{BGOW20}, given a numerical semigroup $S$ minimally generated by $p<a_2<\dots<a_n$ whose Ap\'ery poset is as in Theorem \ref{thm:Apery_KW}, then $S$ lies in the interior of the same face of the Kunz cone ${\mathcal C}_p$ as a KW semigroup $H = \la p,q,h_1,\ldots,h_{n-2} \ra$. Hence, by Kunz's result \cite{Kunz87}, $\k[S]$ and $\k[H]$ have the same Betti numbers, that is, $\beta_i (\k[S]) = \beta_i(\k[H]) = i {n \choose i+1}$ for $1\leq i \leq n-1$, by Theorem \ref{KWconj}. Moreover, note that the class of numerical semigroups whose Ap\'ery poset is as in Theorem \ref{thm:Apery_KW} contains semigroups that are not KW, as Example \ref{ex:KWposet_nonKW} shows.

\begin{example} \label{ex:KWposet_nonKW}
Consider the semigroup $S = \la 5,13,14,17\ra$. Then ${\rm Ap}(S) = \{0,26,17,13,14\}$, so its Ap\'ery poset is the one shown in Figure \ref{fig:example_KWposet_nonKW}. It is clear that $\mP(S)$ satisfies the conditions of Theorem \ref{thm:Apery_KW}; note that $S$ has the same Ap\'ery poset as $H = \la 5,8,9,12\ra \in KW(5,8)$. Hence, the Betti numbers of $S$ (and $H$) are $(1,6,8,3)$.

However, $S$ is not a KW semigroup. Indeed, one has that $S\notin KW(5,13)$ because $S \not\subset \la 5,13,9\ra$, $S\notin KW(5,14)$ because $S \not\subset \la 5,7\ra$, and $S\notin KW(5,17)$ because $S \not\subset \la 5,17,11\ra$.
\end{example}

\begin{figure}[htbp]
\centering
\begin{tikzpicture}
    \node (0) at (0,0) {$0$};
    \node (16) at (-1,1) {$3$};
    \node (32) at (-1,2) {$1$};
    \node (17) at (0,1) {$2$};
    \node (14) at (1,1) {$4$};
    \draw (0) -- (16) -- (32);
    \draw (0) -- (17);
    \draw (0) -- (14);
\end{tikzpicture}
\caption{Hasse diagram of the poset $\mP(S)$ in Example \ref{ex:KWposet_nonKW}.}
\label{fig:example_KWposet_nonKW}
\end{figure}

However, having Betti numbers $\beta_i(\k[S]) = i {n\choose i+1}$, $1\leq i \leq n-1$, does not imply that the Ap\'ery poset of S satisfies the conditions of Theorem \ref{thm:Apery_KW}, as the following example shows.

\begin{example} \label{ex:Ap_arithm}
Take $S = \la 17,19,21,23,25 \ra$. Then, the defining ideal of $S$ is given by the $2\times 2$ minors of a $2\times 5$ matrix, by \cite{GSS13}, and hence the Betti numbers of $S$ are $(1,10,20,15,4)$. However, one has that $44 = 19+25 = 21+23$ in ${\rm Ap}(S)$, and hence $2,4,6,8 \precdot 10$ in ${\mathcal P}(S)$, so  it cannot be as in Theorem \ref{thm:Apery_KW}.
\end{example}

\section{Tangent cone of KW semigroups} \label{sec:tangent_cone}

Let $S$ be a numerical semigroup minimally generated by $a_1,\ldots,a_n$, with multiplicity $a_1$, i.e. $a_1<a_i$ for all $2\leq i \leq n$. For an element $s = \sum_{i=1}^n \alpha_i a_i \in S$, the tuple $\alpha = (\alpha_1,\ldots,\alpha_n) \in \N^n$ is called a {\it factorization} of $s$. Moreover, $\alpha$ is said to be a {\it maximal factorization} if $|\alpha| = \sum_{i=1}^n \alpha_i$ is maximal among all the factorizations of $s$. The semigroup $S$ is called {\it homogeneous} if for every $s\in {\rm Ap}(S)$, all the factorizations of $s$ are maximal.

Consider $I_S \subset \k[t_1,\ldots,t_n]$ the defining ideal of $S$, and denote by $G(S)$ the tangent cone of $S$ at the origin, 
\[G(S) = {\rm gr}_{\mathfrak m}(\k[S]) \cong \k[t_1,\ldots,t_n] / (I_S)_* ,\]
where $\mathfrak m$ is the homogeneous maximal ideal of $\k[S]$, $(I_S)_* = \la f_* \mid f\in I_S\ra$ is the initial ideal of $I_S$, and $f_*$ denotes the homogeneous summand of $f$ with least degree for all $f\neq 0$.

In this section, we provide a necessary and sufficient condition for $G(H)$ to be Cohen-Macaulay, when $H$ is a KW semigroup. For this purpose, first we recall a result by Jafari and Zarzuela in which they characterize the Cohen-Macaulayness of the ring $G(S)$ of a a numerical semigroup $S$ in terms of the binomials in the ideal $I_S$.

\begin{theorem}[{\cite[Thm. 3.12 and 3.17]{JZ18}}] \label{thm:JZ_3.12}
Let $S$ be a numerial semigroup minimally generated by $a_1,\ldots,a_n$ with multiplicity $a_1$. The following are equivalent:
\begin{enumerate}[(a)]
    \item \label{thm:JZ_3.12(a)} 
    $S$ is homogeneous and $G(S)$ is Cohen-Macaulay.
    
    \item \label{thm:JZ_3.12(b)} 
    For all $t^\alpha-t^\beta \in I_S$ with $|\alpha|= \sum_{i=1}^n \alpha_i > |\beta| = \sum_{i=1}^n \beta_i$, we have $\sum_{i=1}^n \alpha_i a_i \notin {\rm Ap}(S)$, and if $\bar{\beta}$ is a maximal factorization of $\sum_{i=1}^n \bar{\beta}_i a_i$, then $\alpha_1\geq \beta_1$, where 
    \[\bar{\beta} = \begin{cases}
        \beta & \text{ if } \beta_1 = 0\\
        (\beta_1-1,\beta_2,\ldots,\beta_n) & \text{ if } \beta_1 > 0.
    \end{cases}\]

    \item \label{thm:JZ_3.12(c)} 
    There exists a minimal set of binomial generators $E$ for $I_S$ such that for all $t^\alpha-t^\beta \in E$ with $|\alpha|> |\beta|$, we have $\alpha_1 \neq 0$.
\end{enumerate}
Moreover, if one of the equivalent conditions above hold, then $S$ is of homogeneous type, i.e., $\beta_i(\k[S]) = \beta_i(G(S))$ for all $i$.
\end{theorem}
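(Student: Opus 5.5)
Since this is a cited result of Jafari and Zarzuela, my plan is to reconstruct it from two standard facts. The first is the order function $\operatorname{ord}(s)=\max\{|\alpha| : \alpha \text{ a factorization of } s\}$. The second is the characterization of Cohen--Macaulayness of $G(S)$ through the initial form $t_1^*$ of the multiplicity generator. Because $\dim G(S)=1$ and $t^{a_1}$ generates a minimal reduction of $\mathfrak m$, $G(S)$ is Cohen--Macaulay if and only if $t_1^*$ is a nonzerodivisor on $G(S)=\k[t_1,\ldots,t_n]/(I_S)_*$. Equivalently, $\operatorname{ord}(s+ka_1)=\operatorname{ord}(s)+k$ for all $s\in{\rm Ap}(S)$ and $k\geq 0$.

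First I will prove (a)$\Leftrightarrow$(b) by translating both sides into statements about $\operatorname{ord}$.
\begin{enumerate}[(i)]
\item Homogeneity fails exactly when some $s\in{\rm Ap}(S)$ has two factorizations of different lengths. That gives a binomial $t^\alpha-t^\beta\in I_S$ with $|\alpha|>|\beta|$ and $\sum\alpha_ia_i\in{\rm Ap}(S)$, which is forbidden by the first clause of (b).
\item Failure of the order condition produces $s\notin{\rm Ap}(S)$ with a factorization $\alpha$ that is longer than every factorization containing $a_1$ ``enough'' times. Comparing $\alpha$ with a maximal factorization $\beta$ of $s$, and passing to $\bar\beta$ when $\beta_1>0$, gives exactly the inequality $\alpha_1<\beta_1$ excluded by (b).
\item Conversely, a violating binomial in (b) yields either an Ap\'ery element with two lengths or a drop in $\operatorname{ord}$ along multiples of $a_1$.
\end{enumerate}

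Next, (b)$\Rightarrow$(c) is by construction. Start from any minimal binomial generating set. If a generator $t^\alpha-t^\beta$ with $|\alpha|>|\beta|$ has $\alpha_1=0$, then (b) gives $\sum\alpha_ia_i\notin{\rm Ap}(S)$. So $\sum\alpha_ia_i-a_1\in S$, and by (b) a maximal factorization can be chosen to contain $a_1$. I will replace the generator by $t^\alpha-t_1t^\gamma$ together with $t_1t^\gamma-t^\beta$. The second binomial lies in a strictly smaller $S$-degree ideal modulo the others, so minimality can be restored. I expect this exchange argument, while keeping the set minimal and terminating, to be the main obstacle. I would organize it as an induction on $S$-degree, using that the multigraded Betti number in each degree is fixed.

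For (c)$\Rightarrow$(a), I will show that $(I_S)_*$ is generated by the initial forms $t^\beta$ of generators with $|\alpha|>|\beta|$ and by the homogeneous binomials, together with a standard basis computed with a local degree reverse lexicographic order in which $t_1$ is the smallest variable. Since by (c) every non-homogeneous generator has $\alpha_1\neq 0$, $t_1$ divides no leading term that arises, so $(I_S)_*:t_1=(I_S)_*$. This gives Cohen--Macaulayness. Homogeneity follows because an Ap\'ery element with two factorization lengths would force a leading term not divisible by $t_1$. The delicate point is to check that S-polynomial reductions (Mora's tangent cone algorithm) preserve this divisibility property.

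Finally, I will prove homogeneous type by reducing modulo $t_1$. The element $t_1$ is regular on $\k[S]$, and by the above $t_1^*$ is regular on $G(S)$. Hence the Betti numbers of $\k[S]$ over $R$ equal those of $A=\k[S]/(t^{a_1})$ over $\bar R=R/(t_1)$, and the Betti numbers of $G(S)$ equal those of $G(S)/(t_1^*)$. The defining ideal of $A$ is generated by monomials for elements outside ${\rm Ap}(S)$ and by binomials between factorizations of Ap\'ery elements, and homogeneity makes these binomials standard graded. So the defining ideal of $A$ is homogeneous and coincides with its own initial ideal, namely the defining ideal of $G(S)/(t_1^*)$. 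Therefore $\beta_i(\k[S])=\beta_i(G(S))$ for all $i$.
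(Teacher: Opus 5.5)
The paper gives no proof of this statement; it is quoted from \cite{JZ18}, so your argument has to stand on its own.

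\textbf{What works.} Your translation of (a)$\Leftrightarrow$(b) into the order function is sound. So is your reduction modulo $t_1$ for homogeneous type, where identifying $\k[S]/(t^{a_1})$ with $G(S)/t_1^*G(S)$ is Valabrega--Valla, using that $t_1^*$ is regular. There is one slip in (ii). The comparison binomial must be $t^\alpha-t^\beta$ with $\beta=\bar\beta+e_1$ and $\bar\beta$ a maximal factorization of $s-a_1$. It cannot be built from a maximal factorization of $s$, which is never shorter than $\alpha$.

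\textbf{The gap is (c)$\Rightarrow$(a).} You assert that $t_1$ divides no leading term arising in Mora's algorithm started from $E$, but you give no argument. As stated, the assertion is false. Take $S=\langle 3,4,5\rangle$ and $E=\{t_2^2-t_1t_3,\ t_1^3-t_2t_3,\ t_3^2-t_1^2t_2\}$, which satisfies (c). With the local degree reverse lexicographic order and $t_1$ smallest, the S-polynomial of the first two elements is $-t_1(t_3^2-t_1^2t_2)$, whose leading term $t_1t_3^2$ is divisible by $t_1$. Whatever is true concerns only the final standard basis, and proving that is essentially the implication itself. Your homogeneity step is also a non sequitur. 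A binomial between two factorizations of an Ap\'ery element involves no $t_1$ at all, so its leading term is automatically $t_1$-free and nothing is contradicted. Condition (c) constrains the \emph{longer} term, not the leading one.

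\textbf{The missing idea is connectivity of factorizations.} It makes (c)$\Rightarrow$(a) elementary. Since $E$ generates $I_S$, any two factorizations $\mu,\nu$ of the same $s$ are joined by a chain $\mu=\gamma_0,\ldots,\gamma_r=\nu$ of factorizations of $s$ with $t^{\gamma_j}-t^{\gamma_{j+1}}=\pm t^{\omega_j}e_j$ and $e_j\in E$. By (c), whenever $|\gamma_{j+1}|<|\gamma_j|$ the longer term of $e_j$ divides $t^{\gamma_j}$, so $t_1\mid t^{\gamma_j}$.
\begin{enumerate}[(i)]
\item If $s\in{\rm Ap}(S)$, no factorization of $s$ involves $t_1$. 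So lengths never change along the chain, and $S$ is homogeneous.
\item If $s\notin{\rm Ap}(S)$, run the chain from a maximal factorization to one with $\nu_1>0$. Either the length never drops, so $\nu$ itself is maximal, or it first drops right after some maximal $\gamma_j$ with $t_1\mid t^{\gamma_j}$. In both cases $\mathrm{ord}(s)=\mathrm{ord}(s-a_1)+1$, i.e.\ $t_1^*$ is regular on $G(S)$.
\end{enumerate}

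\textbf{A smaller point on (b)$\Rightarrow$(c).} Your exchange argument is muddled: both new binomials have the same $S$-degree, so there is no induction on degree. It is cleaner to choose generators degree by degree. For $s\notin{\rm Ap}(S)$, take the star of binomials $t^\delta-t^{\nu_C}$ centred at a maximal factorization $\delta$ with $\delta_1>0$. Here $C$ runs over the classes of factorizations of $s$ under ``share a variable'' that do not contain $\delta$, and $\nu_C\in C$. In Ap\'ery degrees every binomial is homogeneous, so any choice works.
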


Let $p<q$ relatively prime and consider a KW semigroup $H = \la p,q,h_1,\ldots,h_{n-2} \ra \in KW(p,q)$, for some $n\geq 3$. There are two sequences of integers $0<x_1<x_2<\dots <x_{n-2} \leq q/2$ and $p/2\geq y_1>y_2>\dots>y_{n-2} >0$ such that $h_i = pq-x_ip-y_iq$, $1\leq i \leq n-2$.

By \cite[App. A]{KW17}, the defining ideal of $H$, $I_H \subset \k[u,v,u_1,\ldots,u_{n-2}]$, is minimally generated by the following ${n \choose 2}$ $H$-homogeneous binomials:
\begin{equation} \label{eq:mingens}
\begin{split}
    f_{ij} &= u_iu_j-u^{q-x_i-x_j}v^{p-y_i-y_j}, \quad 1 \leq i \leq j \leq n-2 \\
    g_i &= v^{y_i-y_{i+1}}u_i-u^{x_{i+1}-x_i}u_{i+1}, \quad 1 \leq i \leq n-3 \\
    \eta_1 &= v^{p-y_1}-u^{x_1}u_1 \\
    \eta_2 &= v^{y_{n-2}}u_{n-2}-u^{q-x_{n-2}}.
\end{split}
\end{equation}

\begin{remark} \label{rem:IH_deg}
We have the following:
\begin{itemize}
    \item For all $1\leq i \leq j \leq n-2$, $(q-x_i-x_j)+(p-y_i-y_j) > 2$. This is because $q-x_i-x_j \geq 2$ and $p-y_i-y_j \geq 2$, and the inequality is strict if $q$ or $p$ is odd, respectively. Hence, $(f_{ij})_* = u_iu_j$. 
    \item $y_{n-2} +1 \leq q-x_{n-2}$. This is because if $p$ is even, then $y_{n-2}+1 \leq p/2+1\leq (q+1)/2 = q-\lfloor q/2 \rfloor \leq q-x_{n-2}$; and if $p$ is odd, then $y_{n-2}+1 \leq \lfloor p/2\rfloor+1 \leq \lfloor q/2 \rfloor \leq q-\lfloor q/2 \rfloor \leq q-x_{n-2}$. Hence, either $(\eta_2)_* = \eta_2$ if $y_{n-2}+1 = q-x_{n-2}$, or $(\eta_2)_* = v^{y_{n-2}}u_{n-2}$ otherwise.
\end{itemize}

\end{remark}

The main result of this section is Theorem \ref{thm:tangentcone_CM}, where we characterize when the tangent cone of $H$, $G(H)$, is Cohen-Macaulay. For this purpose, we use conditions \ref{thm:JZ_3.12(b)} and \ref{thm:JZ_3.12(c)} of Theorem \ref{thm:JZ_3.12}.

\begin{theorem} \label{thm:tangentcone_CM}
For a semigroup $H = \la h_1,\ldots,h_{n-2} \ra \in KW(p,q)$ of embedding dimension $n$, one has that $G(H)$ is Cohen-Macaulay if and only if 
\begin{enumerate}[(1)]
    \item\label{thm:tangentcone_CM(1)} $x_{i+1}-x_i \geq y_i-y_{i+1}$ for all $i\in \{1,\ldots,n-3\}$, and
    \item\label{thm:tangentcone_CM(2)} $x_1+y_1 \geq p-1$.
\end{enumerate}
When $G(H)$ is Cohen-Macaulay, then its Betti numbers are $\beta_0 = 1$ and $\beta_i = i {n \choose i+1}$, $1\leq i \leq n-1$.
\end{theorem}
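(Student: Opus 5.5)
The plan is to use Theorem \ref{thm:JZ_3.12}. For sufficiency I will use condition \ref{thm:JZ_3.12(c)} on the explicit minimal generating set \eqref{eq:mingens} of $I_H$. For necessity I will use condition \ref{thm:JZ_3.12(b)}, together with the standard fact that Cohen–Macaulayness of $G(H)$ forces the leading forms of the binomials to behave well with respect to the multiplicity variable $u$. Throughout, $u$ corresponds to $p$, the multiplicity. For each binomial $t^\alpha-t^\beta$ in \eqref{eq:mingens}, I will record which monomial has the larger total degree.

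\textbf{Sufficiency.} For $f_{ij}$, Remark \ref{rem:IH_deg} shows that $u^{q-x_i-x_j}v^{p-y_i-y_j}$ is the monomial of larger degree, and it is divisible by $u$ because $q-x_i-x_j\geq 2$.
\begin{itemize}
\item For $g_i$, the monomials have degrees $y_i-y_{i+1}+1$ and $x_{i+1}-x_i+1$. Condition \ref{thm:tangentcone_CM(1)} says that the $u$-side has degree at least the $v$-side, so whenever the degrees differ, the larger one contains $u$ (note $x_{i+1}-x_i\geq 1$).
\item For $\eta_1$, the degrees are $p-y_1$ versus $x_1+1$. Condition \ref{thm:tangentcone_CM(2)} gives $x_1+1\geq p-y_1$, so the larger side, if the degrees differ, is $u^{x_1}u_1$, which contains $u$.
\item For $\eta_2$, Remark \ref{rem:IH_deg} gives $q-x_{n-2}\geq y_{n-2}+1$, so the larger side is $u^{q-x_{n-2}}$.
\end{itemize}
Hence $E$ satisfies \ref{thm:JZ_3.12(c)}. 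Theorem \ref{thm:JZ_3.12} then yields that $G(H)$ is Cohen–Macaulay and that $H$ is of homogeneous type, so $\beta_i(G(H))=\beta_i(\k[H])=i\binom{n}{i+1}$ by Theorem \ref{KWconj}.

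\textbf{Necessity.} I would argue by contrapositive.

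Suppose first that \ref{thm:tangentcone_CM(1)} fails for some $i$, i.e. $y_i-y_{i+1}>x_{i+1}-x_i$. Then $g_i=v^{y_i-y_{i+1}}u_i-u^{x_{i+1}-x_i}u_{i+1}$ has larger side $\alpha=v^{y_i-y_{i+1}}u_i$ with $\alpha_1=0$. The element $h_i+(y_i-y_{i+1})q$ is not in ${\rm Ap}(H)$ by Proposition \ref{prop:Apery_KW}, since the exponent of $q$ has reached $y_i-y_{i+1}$. So the first clause of \ref{thm:JZ_3.12(b)} is not violated, and I need the second clause instead. Take $\beta=u^{x_{i+1}-x_i}u_{i+1}$. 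Then $\beta_1=x_{i+1}-x_i>0$, so $\bar\beta=u^{x_{i+1}-x_i-1}u_{i+1}$. I must check that $\bar\beta$ is a maximal factorization of its value $h_{i+1}+(x_{i+1}-x_i-1)p$. Once that holds, $\alpha_1=0<\beta_1$ violates \ref{thm:JZ_3.12(b)}. If $H$ is not homogeneous, \ref{thm:JZ_3.12(a)} fails anyway, but then I need a direct argument that $G(H)$ is not Cohen–Macaulay. So it is safer to use the criterion that $G(H)$ is Cohen–Macaulay iff $u$ is a nonzerodivisor on $G(H)$. Under that criterion, $v^{y_i-y_{i+1}}u_i$ lies in $(I_H)_*$ after multiplication by $u$ but not before. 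Showing that $v^{y_i-y_{i+1}}u_i\notin (I_H)_*$ requires knowing that no other element of $I_H$ has it as initial form; I would verify this via the Apéry set description, since $h_i+(y_i-y_{i+1})q$ has a unique factorization of that degree. Multiplying by $u$, I expect $u\cdot v^{y_i-y_{i+1}}u_i$ to be in $(I_H)_*$ by combining $g_i$ with a suitable $f_{\cdot\cdot}$ or $\eta$ relation that raises the degree on the other side.

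The failure of \ref{thm:tangentcone_CM(2)} is handled the same way with $\eta_1$, using the monomial $v^{p-y_1}$, which is $u$-free and of larger degree.

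The main obstacle is this necessity direction: showing rigorously that the offending $u$-free monomial is a genuine non-regular witness for $u$ (equivalently, verifying the maximal-factorization hypothesis of \ref{thm:JZ_3.12(b)}). For this I would compute all factorizations of the relevant elements $h_{i+1}+(x_{i+1}-x_i-1)p$ and $(x_1-1)p+h_1$ using $h_i=pq-x_ip-y_iq$ and the bounds $x_j\leq q/2$, $y_j\leq p/2$.
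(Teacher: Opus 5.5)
Your sufficiency half is the paper's argument: condition (c) of Theorem~\ref{thm:JZ_3.12} is checked generator by generator on \eqref{eq:mingens}, and then homogeneous type and Theorem~\ref{KWconj} give the Betti numbers. The necessity half has a genuine gap.

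Everything rests on $\bar\beta$ being a maximal factorization of its value, where $\bar\beta$ is $u^{x_{i+1}-x_i-1}u_{i+1}$, or $u^{x_1-1}u_1$ for $\eta_1$. This is exactly the step you postpone. Moreover, a violation of (b) only refutes (a), which says ``$H$ homogeneous \emph{and} $G(H)$ Cohen--Macaulay''. To conclude that $G(H)$ is not Cohen--Macaulay you need $H$ to be homogeneous. The paper takes this from \cite[Rem.~3]{GSS25}; you leave it open. The paper then handles maximality by a descent. If $M(g)=\bar\beta$ is not maximal, it picks a maximal factorization $M'$, divides out the largest power $u^\ell$ dividing $M'$ (which also divides $M(g)$), and passes to $(M'-M(g))/u^\ell$. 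The longer side stays $u$-free and the $u$-exponent of the shorter side drops, so the process ends at a binomial violating (b).

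Your fallback via ``$G(H)$ is Cohen--Macaulay iff $u$ is a nonzerodivisor'' uses the wrong monomial. Multiplying the longer side $v^{y_i-y_{i+1}}u_i$ by $u$ need not make it non-maximal. Take $H_3=\la 8,17,44,70,63\ra$ from the paper's last example, with $i=1$:
\begin{itemize}
\item the only factorizations of $78$ are $v^2u_1$ and $uu_2$;
\item the only factorizations of $86$ are $uv^2u_1$ and $u^2u_2$.
\end{itemize}
So neither $v^2u_1$ nor $uv^2u_1$ lies in $(I_{H_3})_*$. The ``equivalently'' in your last paragraph conflates this monomial with $\bar\beta$.

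The correct witness is $\bar\beta$ itself. A monomial lies in $(I_H)_*$ iff it is not a maximal factorization of its $H$-degree. Hence $u\cdot u^{x_{i+1}-x_i-1}u_{i+1}=(g_i)_*\in(I_H)_*$, while $u^{x_{i+1}-x_i-1}u_{i+1}\notin(I_H)_*$ once it is maximal. This route needs no homogeneity.

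Maximality is then quick. No monomial of \eqref{eq:mingens} divides $u^{x_{i+1}-x_i-1}u_{i+1}$: the $v$-free ones involve some $u_j$ with $j\neq i+1$, or $u_{i+1}^2$, or at least $x_{i+1}-x_i$ factors of $u$. Since these binomials generate $I_H$, fibers are connected under them, so it is the only factorization of its value. For $u^{x_1-1}u_1$, the only available move is $u^{q-2x_1}\mapsto u_1^2$ (possible only when $y_1=p/2$), and it shortens factorizations. As submitted, however, none of this is carried out, and the witness you name does not work.
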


\begin{proof}
If \ref{thm:tangentcone_CM(1)} and \ref{thm:tangentcone_CM(2)} hold, using these two conditions and Remark \ref{rem:IH_deg}, the minimal generating set of $I_H$ described in equation \eqref{eq:mingens} satisfies condition \ref{thm:JZ_3.12(c)} of Theorem \ref{thm:JZ_3.12}, so $G(H)$ is Cohen-Macaulay. Moreover, one has that $\beta_i(G(H)) = \beta_i(\k[H]) = i {n \choose i+1}$ for all $1\leq i \leq n-1$, by Theorem \ref{KWconj}.

Conversely, suppose that \ref{thm:tangentcone_CM(1)} does not hold for some $i \in \{1,\ldots,n-3\}$, i.e., $x_{i+1}-x_i < y_i - y_{i+1}$, and let us find a binomial $g$ contradicting condition \ref{thm:JZ_3.12(b)} of Theorem \ref{thm:JZ_3.12}. This will imply that $G(H)$ is not Cohen-Macaulay, by Theorem \ref{thm:JZ_3.12}, since we already know that $H$ is homogeneous (\cite[Rem. 3]{GSS25}).
We describe a finite procedure to find such a binomial $g$.

Start with $g = v^{y_i-y_{i+1}}u_i-u^{x_{i+1}-x_i}u_{i+1}$. Denote $M(g) = g_* / u = u^{x_{i+1}-x_i-1} u_{i+1}$ the tail of $g$ divided by $u$ and let $s(g) = (x_{i+1}-x_i-1)p+h_{i+1} \in H$ be its $H$-degree. 

\begin{enumerate}[label*=\arabic*.]
    \item If $M(g)$ corresponds to a maximal factorization of $s(g)$, then 
    we are done.
    \item Otherwise, let $M' \in \k[u,v,u_1,\ldots,u_{n-2}]$ be the monomial corresponding to a maximal factorization of $s(g)$.

    \begin{enumerate}[label*=\arabic*.]
        \item If $u$ does not divide $M'$, then $u$ must divide $M(g)$. Otherwise, one would have that the $H$-degree of $M(g)$ is $h_{i+1} \in {\rm Ap}(H)$, and hence $H$ would not be homogeneous. Take $\tilde{g} = M'-M(g)$ and start again with $\tilde{g}$.
        
        \item If $u$ divides $M'$, let $\ell \geq 1$ be maximal such that $u^\ell $ divides $M'$. Since $h_{i+1}$ is a minimal generator of $H$, one must have $\ell < x_{i+1}-x_i-1$, so $u^\ell$ also divides $M(g)$. Set $\tilde{g} = (M'-M(g)) / u^\ell$ and start again with $\tilde{g}$.
    \end{enumerate}

    \noindent Note that every time we replace the binomial $g$ by a new binomial $\tilde{g}$, the binomial $\tilde{g} \in I_H$ remains non homogeneous, while the exponent of $u$ in the tail monomial strictly decreases. Therefore, the procedure terminates after finitely many steps.
\end{enumerate}

If \ref{thm:tangentcone_CM(2)} does not hold, one can use a similar construction starting from the binomial $g = v^{p-y_1}-u^{x_1}u_1$ to conclude that $G(H)$ is not Cohen-Macaulay in this case. This ends the proof.
\end{proof}

\begin{remark}
\begin{enumerate}
    \item Geometrically, condition \ref{thm:tangentcone_CM(1)} in Theorem \ref{thm:tangentcone_CM} says that the steps in the lattice path defining $H$ are wider than tall; see Figure \ref{fig:lattice_path_tangent_cone}.

\begin{figure}[htbp]
\begin{subfigure}[b]{0.45\textwidth}
\centering
\includegraphics[width=0.8\textwidth]{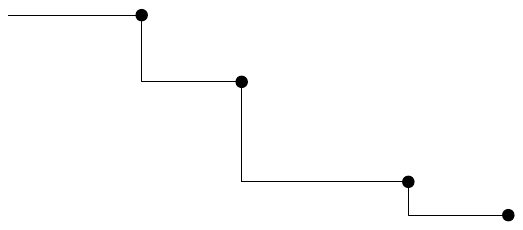}
\end{subfigure}
\begin{subfigure}[b]{0.45\textwidth}
\centering
\includegraphics[width=0.5\textwidth]{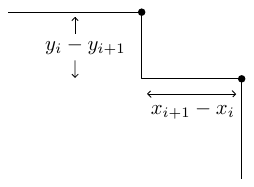}
\end{subfigure}
\caption{Lattice path satisfying condition \ref{thm:tangentcone_CM(1)} in Theorem \ref{thm:tangentcone_CM} (left), and a zoomed-in view of one of its steps (right).}
\label{fig:lattice_path_tangent_cone}
\end{figure}

    \item If $H = \la p,q,h_1,\ldots,h_{n-2}\ra  \in KW_D(p,q)$, i.e., if it is a KW semigroup whose defining ideal is generated by the $2\times 2$ minors of a $2\times n$ matrix, by \cite[Thm. 1.4]{GSS25} there exist $x,y\in \Z_{>0}$ such that $(n-2)x \leq q/2$ and $(n-2)y \leq p/2$, and $h_i = pq-ixp-(n-1-y)iq$ for $1\leq i \leq n-2$. In this case, $G(H)$ is Cohen-Macaulay if and only if $x>y$ and $x+(n-2)y \geq p-1$.
\end{enumerate}

\end{remark}

As a direct consequence, we get the minimal number of generators of $(I_H)_*$ when $G(H)$ is Cohen-Macaulay, and also a minimal generating set of this ideal.

\begin{corollary}
Let $H = \la p,q,h_1,\ldots,h_{n-2} \ra  \in KW(p,q)$ such that 
\begin{enumerate}[(1)]
    \item $x_{i+1}-x_i \geq y_i-y_{i+1}$ for all $i\in \{1,\ldots,n-3\}$, and
    \item $x_1+y_1 \geq p-1$.
\end{enumerate}
Then, the minimal number of generators of $(I_H)_*$ is $\mu((I_H)_*) = {n \choose 2}$. Moreover, $(I_H)_*$ is minimally generated by the ${n \choose 2}$ monomials and binomials:
\[\begin{array}{rcll}
    (f_{ij})_* & = &  u_iu_j, & \quad 1 \leq i \leq j \leq n-2, \\
    (g_i)_* & = & v^{y_i-y_{i+1}}u_i-\epsilon_i u^{x_{i+1}-x_i}u_{i+1}, & \quad 1 \leq i \leq n-3, \\
    (\eta_1)_* & = & v^{p-y_1}- \epsilon_1' u^{x_1}u_1, & \\
    (\eta_2)_* & = & v^{y_{n-2}}u_{n-2} - \epsilon_{n-2}' u^{q-x_{n-2}}, &
\end{array}\]
where $\epsilon_i = 1$ if $x_{i+1}-x_i = y_i-y_{i+1}$ and $0$ otherwise; $\epsilon_1' = 1$ if $x_1+y_1=p-1$ and $0$ otherwise; and $\epsilon_{n-2}' = 1$ if $y_{n-2}+1=q-x_{n-2}$ and $0$ otherwise.
\end{corollary}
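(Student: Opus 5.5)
The plan is to read off a minimal generating set of $(I_H)_*$ from the Kunz--Waldi generators in \eqref{eq:mingens}, using the Cohen--Macaulay criterion. Under hypotheses (1) and (2), Theorem \ref{thm:tangentcone_CM} says $G(H)$ is Cohen--Macaulay and $H$ is of homogeneous type, that is, $\beta_i(G(H))=\beta_i(\k[H])$ for all $i$, by Theorem \ref{thm:JZ_3.12}. In particular $\beta_1(G(H))=\beta_1(\k[H])={n\choose 2}$ by Theorem \ref{KWconj}. This already gives $\mu((I_H)_*)={n\choose 2}$, since $\beta_1$ of $\k[t]/(I_H)_*$ is exactly $\mu((I_H)_*)$.

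For the explicit generating set, I will use the standard fact behind condition \ref{thm:JZ_3.12(c)} of Theorem \ref{thm:JZ_3.12}. In the proof of \cite{JZ18}, when a minimal binomial generating set $E$ satisfies (c), the initial forms $\{f_*\mid f\in E\}$ generate $(I_S)_*$: these generators form a standard basis with respect to the negative degree ordering where $u$ is least. The proof of Theorem \ref{thm:tangentcone_CM} has already checked (c) for the set \eqref{eq:mingens}, so $(I_H)_*$ is generated by $(f_{ij})_*$, $(g_i)_*$, $(\eta_1)_*$ and $(\eta_2)_*$.

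It remains to compute these initial forms. Remark \ref{rem:IH_deg} gives $(f_{ij})_*=u_iu_j$ and $(\eta_2)_*$ as stated. For $g_i$, the two monomials have degrees $y_i-y_{i+1}+1$ and $x_{i+1}-x_i+1$. By hypothesis (1), either these are equal, in which case $\epsilon_i=1$, or the first is strictly smaller, so $(g_i)_*=v^{y_i-y_{i+1}}u_i$. For $\eta_1$, the degrees are $p-y_1$ and $x_1+1$. Hypothesis (2) gives $p-y_1\le x_1+1$, with equality exactly when $x_1+y_1=p-1$.

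Finally, minimality: the displayed list has ${n\choose 2}$ elements and generates an ideal whose minimal number of generators is ${n\choose 2}$. A homogeneous generating set of a graded ideal always contains a minimal one, so the list itself is minimal.

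The main obstacle is the claim that the initial forms of the generators in $E$ generate $(I_H)_*$. It is not automatic from (c) as stated, so I would cite the proof of \cite[Thm. 3.17]{JZ18}, where (c) is shown to imply that $E$ is a standard basis. If that citation is unavailable, the fallback is to verify directly that the list generates $(I_H)_*$ via the Ap\'ery set description in Proposition \ref{prop:Apery_KW}. The idea is that the monomials not in the ideal generated by the list, taken modulo $u$, must correspond bijectively to ${\rm Ap}(H)$. Combined with the Cohen--Macaulay property of $G(H)$, this equality of Hilbert functions forces the two ideals to coincide.
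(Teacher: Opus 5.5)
Your proof is correct, but it runs in the opposite order from the paper's. The paper gets $\mu((I_H)_*)=\binom n2$ just as you do. It then lets $J$ be the ideal generated by the listed forms, notes $J\subseteq (I_H)_*$, asserts that the listed forms minimally generate $J$, and concludes $J=(I_H)_*$ from $\mu(J)=\mu((I_H)_*)$. As written, that last inference needs more than equality of $\mu$: containment plus equal minimal number of generators does not force equality (e.g.\ $(x^2,y)\subsetneq (x,y)$). One would also need the listed forms to be independent modulo $\mathfrak n(I_H)_*$, where $\mathfrak n=(u,v,u_1,\ldots,u_{n-2})$. You prove generation first and get minimality from the count, which is the sound direction.

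The step you flag need not be extracted from \cite{JZ18}; it has a short direct proof.
\begin{itemize}
\item Under (c), every $f\in E$ satisfies $f\equiv f_*\pmod u$. Hence $J+(u)=I_H+(u)$, whose quotient is $\k[H]/u\,\k[H]$, of dimension $|{\rm Ap}(H)|=p$.
\item Since $G(H)$ is Cohen--Macaulay, $u$ is $G(H)$-regular, so $\dim_\k G(H)/uG(H)=e(G(H))=p$.
\item Because $J\subseteq (I_H)_*$, the two colengths force $J+(u)=(I_H)_*+(u)$. Regularity of $u$ then gives $(I_H)_*=J+u(I_H)_*$, and graded Nakayama yields $J=(I_H)_*$.
\end{itemize}

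Your Ap\'ery-set fallback is the same argument, with $\dim_\k \k[u,v,u_1,\ldots,u_{n-2}]/(J+(u))=p$ read off from the standard monomials $v^\lambda$ ($\lambda<p-y_1$) and $v^\lambda u_i$ ($\lambda<y_i-y_{i+1}$, $y_{n-1}=0$). It bypasses (c) entirely. Moreover, since $\dim_\k G(H)/uG(H)\ge p$ with equality iff $u$ is regular, it even reproves that $G(H)$ is Cohen--Macaulay.

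One caveat is shared with the paper. The first item of Remark~\ref{rem:IH_deg} fails for $H=\la p,q,(p+q)/2\ra$, for example $\la 3,4,5\ra$ with $x_1=2$, $y_1=1$. There $f_{11}=u_1^2-uv$ is already homogeneous, and the degree-$2$ part of $(I_H)_*$ is spanned by $u_1^2-uv$, $u_1v$, $v^2$. So in that case the list must contain $u_1^2-uv$ rather than $u_1^2$. Your argument goes through unchanged once this is corrected.
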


\begin{proof}
By Theorem \ref{thm:tangentcone_CM}, $\mu((I_H)_*) = \mu(I_H) = {n \choose 2}$ in this case. Moreover, one has that \[J:= \la \{(f_{ij})_* \mid 1\leq i\leq j \leq n-2 \} \cup \{(g_i)_* \mid 1\leq i \leq n-3 \} \cup \{(\eta_1)_*,(\eta_2)_*\} \ra \subset (I_H)_*.\] 
One can check that all the generators of $J$ are minimal, so $\mu(J) = {n \choose 2}$ and hence $J = (I_H)_*$.
\end{proof}

The next example shows that this $\mu((I_H)_*)$ may be bigger than ${n \choose 2}$ when $G(H)$ is not Cohen-Macaulay, and this number does not only depend on the embedding dimension of $H$.

\begin{example}

\begin{enumerate}
\item Let $H_1 = \la 8,17,60,69,78\ra \in KW(8,17)$. $H_1$ is defined by the sequences $x_1=1<x_2=2<x_3=3$ and $y_1=4>y_2=3>y_3=2$. Note that $G(H_1)$ is not Cohen-Macaulay since $x_1+y_1 = 5 < 7$. A direct computation with Singular shows that the Betti numbers of $G(H_1)$ are $(1,    11,    24,    21,     8,     1)$, and hence, $\mu({I_{H_1}}_*) = 11 > {5 \choose 2}$.

\item Let $H_2 = \la 8,17,69,70,71\ra \in KW(8,17)$. $H_2$ is defined by the sequences $x_1=2<x_2=4<x_3=6$ and $y_1=3>y_2=2>y_3=1$. Note that $G(H_2)$ is not Cohen-Macaulay since $x_1+y_1 = 5 < 7$. A direct computation with Singular shows that the Betti numbers of $G(H_2)$ are $(1,    12,    28,    27,    12,     2)$, and hence, $\mu({I_{H_2}}_*) = 12 > {5 \choose 2}$.

\item Let $H_3 = \la 8,17,44,70,63\ra \in KW(8,17)$. $H_3$ is defined by the sequences $x_1=3<x_2=4<x_3=7$ and $y_1=4>y_2=2>y_3=1$. Note that $G(H_3)$ is not Cohen-Macaulay since $x_2-x_1 < y_1-y_2$. A direct computation with Singular shows that the Betti numbers of $G(H_3)$ are $(1,    11,    24,    21,     8,     1)$, and hence, $\mu({I_{H_3}}_*) = 11 > {5 \choose 2}$.
\end{enumerate}

\end{example}


\begin{thebibliography}{99}

\bibitem{BGOW20}{
W. Bruns, P. Garc\'ia-S\'anchez, C. O’Neill, and D. Wilburne,
Wilf’s conjecture in fixed multiplicity,
{\em Internat. J. Algebra Comput.} {\bf 30} (2020), 861--882.
}

\bibitem{GSS13}{
P. Gimenez, I. Sengupta, and H. Srinivasan,
Minimal graded free resolutions for monomial curves defined by arithmetic sequences,
{\em J. Algebra} {\bf 388} (2013), 294--310.
}

\bibitem{GS19}{
P. Gimenez and H. Srinivasan,
The structure of the minimal free resolution of semigroup rings obtained by gluing,
{\em J. Pure Appl. Algebra} {\bf 223} (2019), 1411--1426.
}

\bibitem{GSS25}{
M. Gonz{\'a}lez-S{\'a}nchez, S. Singh, and H. Srinivasan,
The Betti numbers of Kunz-Waldi semigroups,
{\em Proc. Amer. Math. Soc.} {\bf 153} (2025), 4215--4224.
}

\bibitem{Kunz87}{
E. Kunz,
\"Uber die Klassifikation numerischer Halbgruppen,
{\em Fakult{\"a}t Mathematik der Universit{\"a}t Regensburg}, vol. 11 (1987).
}

\bibitem{KW14}{
E. Kunz and R. Waldi, 
Geometrical illustration of numerical semigroups and of some of their invariants,
{\em Semigroup Forum} {\bf 89} (2014), 664--691.
}

\bibitem{KW17}{
E. Kunz and R. Waldi, 
On the deviation and the type of certain local Cohen–Macaulay rings and numerical semigroup,
{\em J. Algebra} {\bf 478} (2017), 397--409.
}

\bibitem{JZ18}{
R. Jafari and S. Zarzuela,
Homogeneous numerical semigroups,
{\em Semigroup Forum} {\bf 97} (2018), 278--306.
}

\bibitem{Mor87}{
M. Morales,
Syzygies of monomial curves and a linear diophantine problem of Frobenius,
{\em MPIM Preprint Series} {\bf 48} (1987), 17 pp.
}

\bibitem{Mor91}{
M. Morales,
Noetherian Symbolic Blow-Ups,
{\em J. Algebra} {\bf 140} (1991), 12--25.
}


\bibitem{SS25}{
S. Singh and H. Srinivasan, 
A Class of Numerical Semigroups Defined by Kunz and Waldi - Their Principal Matrices and Structure,
{\em J. Algebra. Appl.} {\bf 24} (2025), 20 pp.
}

\end{thebibliography}
\end{document}